\def\r{\mathbb R}
\def\s{\mathbb S}
\def\h{\mathbb H}
\def\e{\mathbb E}
\def\v{\mathcal V}
\def\w{\mathcal W}
\def\x{\mathfrak X}
\newtheorem{theorem}{Theorem}[section]
 \newtheorem{proposition}[theorem]{Proposition}
 \newtheorem{corollary}[theorem]{Corollary}
\theoremstyle{definition}
\newtheorem{definition}[theorem]{Definition}
\newtheorem{example}[theorem]{Example}
\newtheorem{remark}[theorem]{Remark}
\begin{document}

\title[Hypersurfaces with torse-forming axes]{Hypersurfaces in Riemannian manifolds with torse-forming axes}

\author{Muhittin Evren Aydin$^1$}
\address{$^1$Department of Mathematics, Faculty of Science, Firat University, Elazig,  23200 Turkey
\newline
ORCID: 0000-0001-9337-8165}
\email{meaydin@firat.edu.tr}
\author{ Adela Mihai$^2$}
 \address{$^2$Technical University of Civil Engineering Bucharest,
Department of Mathematics and Computer Science, 020396, Bucharest, Romania
and Transilvania University of Bra\c{s}ov, Interdisciplinary Doctoral
School, 500036, Bra\c{s}ov, Romania
\newline
ORCID: 0000-0003-2033-8394}

 \email{adela.mihai@utcb.ro, adela.mihai@unitbv.ro}
\author{ Cihan Özgür$^3$}
 \address{$^3$Department of Mathematics, Izmir Democracy University, 35140, Karabaglar, Izmir, Turkey
 \newline
ORCID: 0000-0002-4579-7151}
 \email{cihan.ozgur@idu.edu.tr}

\keywords{Torse-forming vector field, warped product, constant angle hypersurface, Riemannian manifold}
\subjclass{53B20; 53A07;53C42}
\begin{abstract}
In this paper, we study orientable hypersurfaces $N$ in Riemannian manifolds $(M,\langle , \rangle)$ for which the inner product $\langle U, \v \rangle$ is constant, where $U$ is the unit normal vector field to $N$ and $\v$ is a globally defined torse-forming vector field on $M$, called the axis of $N$. When $\v$ is a unit torse-forming vector field, $N$ becomes a constant angle hypersurface with axis $\v$, and we classify such hypersufaces. After that, the case when $\v$ is a torqued vector field is considered and a corresponding classification is given.
\end{abstract}
\maketitle

%%%%%%%%%%%%%%%%%%%%%%%%%%%%%%%%
\section{Introduction} \label{intro}
%%%%%%%%%%%%%%%%%%%%%%%%%%%%%%%%

One of the major problems in classical differential geomety is to determine the geometric properties of constant angle submanifolds, which has been widely studied in the literature. Without attempting to give a complete list, we refer to \cite{bar,cd,crs0,dgd,dmvv,dsr,gpr,lo,loy3,mun,mun1,mun2}.  More clearly, let $M$ be a Riemannian manifold, and let $N\subset M$ be a submanifold. Once a vector field $\v$ is fixed in the ambient space $M$, a constant angle submanifold $N$ is defined as a submanifold of $M$ for which the angle between $\v$ and $U$ is constant on $N$, where $U$ denotes the tangent or principal normal vector field of $N$ in the case of a curve, an arbitrary vector field in the first normal space of $N$ for a submanifold, or the unit normal vector field of $N$ in the case of a hypersurface. 

In the study of such problems, one naturally encounters another fundamental question: the global existence of the vector field $\v$ in the ambient space $M$, which governs its geometry and topology. Hence, once the type of the vector field $\v$ is fixed, the investigation of constant angle submanifolds also leads to consider the problem of determining those ambient spaces admitting such a vector field $\v$. 

Regarding the types of vector fields, we recall the notion of a torse-forming vector field $\v$ on $M$, introduced by Yano \cite{ya1,ya0}. Let $\nabla^0$ be the Levi-Civita connection on $M$. A vector field $\v$ is said to be {\it torse-forming} if it satisfies
\begin{equation}\label{int-tor}
\nabla^0_X\v=fX+\omega(X) \v, \quad X\in \mathfrak{X}(M),
\end{equation}
where $f$ is a smooth function on $M$ and $\omega$ is a $1$-form, called the conformal scalar (or potential function) and the generating form of $\v$, respectively (see also \cite{bo1}). If the conformal scalar $f$ and the generating form $\omega$ is nowhere zero, then the torse-forming vector field $\v$ is said to be proper.

The existence of torse-forming vector fields on contact geomety, generalized Robertson-Walker space times, twisted Lorentizan manifolds, and on spheres was investigated in \cite{c0,id,mam0,mam1,mih0}. Moreover, the authors in \cite{c00} established an interesting link between the rotational hypersurfaces immersed in $\e^m$ with an axis passing through the origin and the torse-forming vector fields: the tangential component of the immersion always defines a proper torse-forming vector field.

The vector field $\w$ dual to the $1$-form $\omega$ is called {\it generative} of $\v$. We highlight two subclasses of torse-forming vector fields, called torqued and anti-torqued vector fields \cite{crs3,dan}. A torse-forming vector field is called a {\it torqued} (respectively {\it anti-torqued}) vector field if $\w$ is perpendicular (respectively parallel) to $\v$. See also \cite{cs,na}. Similar to the torse-forming vector fields, the existence of these vector fields on the Euclidean space $\e^m$, the hyperbolic space $\h^m$, and the sphere $\s^m$ were studied in \cite{amc,dan,dta}. 

Torqued and anti-torqued vector fields are used as tools to characterize manifolds and submanifolds. For example, in \cite{amc,crs3} rectifying submanifolds in Riemannian manifolds were characterized. The authors in \cite{ymy} investigated submanifolds of Kenmotsu manifolds admitting torqued vector fields. Recently, curves and hypersurfaces in real space forms have been classified by using concircular vector fields, which are particular cases of torse-forming vector fields with vanishing $\omega$ \cite{loy4,loy5}. Recall that a concircular hypersurface $N$ is an orientable hypersurface in a real space form $M$ such that $\langle \v,U\rangle$ is a constant function $\vartheta$ on $N$, where $U$ is the unit normal vector field on $N$ and $\langle ,\rangle$ is the metric tensor of $M$.  

Motivated by the studies \cite{loy4,loy5}, we consider hypersurfaces $N$ in Riemannian manifolds $M$ satisfying 
\begin{equation}\label{intro1}
\langle \v,U\rangle=\vartheta, \quad \vartheta \in \r,
\end{equation}
where $\v$ is a torse-forming vector field on $M$ and $U$ is the unit normal vector field on $N$. We call $\v$ the {\it axis} of $N$. In particular, we are interested in two cases: the case $\v$ is a unit torse-forming vector field and the case $\v$ is a torqued vector field. Recall that every unit torse-forming vector field reduces to a unit anti-torqued vector field (see \cite[Proposition 3.6]{amc}). Moreover, as long as $\v$ is a unit vector field, we have $\vartheta \in [-1,1]$, and the condition that $\langle \v,U\rangle=\vartheta$ implies that the angle between $\v$ and $U$ is constant at every point of $N$. In this case, $N$ is known as a helix hypersurface, a constant slope hypersurface or a constant angle hypersurface with axis $\v$. 

We would like to emphasize two main differences between our study and the cited works. First, in the cited papers at the beginning of this section, the ambient space and the corresponding axis $\v$ were fixed in advance, and the results and examples were obtained with respect to that particular setting. In contrast, in our study the ambient space is an arbitrary Riemannian manifold and the axis $\v$ can be either a unit torse-forming or a torqued vector field. Hence, depending on the choice of the ambient space, different examples of such axes may arise. For instance, in a real space form, more than one concircular axis can exist (see \cite[Theorem 1]{loy5}).

Second, a torse-forming vector field on $M$, for example when it is particularly a torqued vector field, is not necessarily of constant length (see \cite[Proposition 3.2]{amc}). Hence, in such a case, the hypersurface $N$ lacks the constant angle property with respect to the axis $\v$.

The organization of the paper is the following. In Section \ref{pre}, we provide the basic formulas for hypersurfaces in Riemannian manifolds. In Section \ref{anti}, assuming the axis to be a unit torse-forming vector field, we first classify constant angle surfaces in three dimensional Riemannian manifolds (see Theorem \ref{ts1}). Then, under a particular assumption, this result is extended to higher dimensions (see Theorem \ref{ths1}). Finally, in Section \ref{torqued}, we characterize the hypersurfaces whose axis is torqued (see Theorems \ref{ths10} and \ref{ths11}). Non trivial examples corresponding to the obtained results are provided in both sections.

%%%%%%%%%%%%%%%%%%%%%%%%%%%%%%%%
\section{Preliminaries} \label{pre}
%%%%%%%%%%%%%%%%%%%%%%%%%%%%%%%%

Let $(M,\langle,\rangle)$ be a Riemannian manifold with $\mbox{dim}(M)=n+1$ and $\nabla^0$ the Levi-Civita connection on $M$. The Riemannian curvature tensor of $\nabla^0$ is defined as
$$
R^0(X,Y)Z=\nabla^0_X\nabla^0_YZ-\nabla^0_Y\nabla^0_XZ-\nabla^0_{[X,Y]}Z, \quad X,Y,Z \in \x(M).
$$
Denote by $u,v\in T_pM$ two linearly independent vectors tangent to $M$ at $p \in M$ and by $\pi=\text{Span}\{u,v\}$ a plane section. Then, the sectional curvature of $\pi$ is defined by
$$
K^0(\pi)=\frac{\langle(R(u,v)v,u\rangle}{\langle u,u\rangle \langle v,v\rangle-\langle u,v\rangle^2}.
$$

$M$ is called a {\it space form} if its sectional curvature $K^0$ is a constant, i.e. $K^0=c\in \r$ for all plane sections $\pi$ and points $p \in M$. In this case, the curvature tensor $R^0$ becomes
$$
R^0(X,Y)Z=c\{\langle Y,Z\rangle X-\langle X,Z\rangle Y\}, \quad  X,Y,Z \in \x(M).
$$

Let $\phi:N\to M$ be a smooth isometric immersion of codimension $1$. For a point $p\in N$, let $U:=(T_pN)^\perp$ be the unit normal vector field at $p$, that is, $|U|=1$, where $|\cdot|$ is the induced norm by the Riemannian metric $\langle , \rangle$. Then the shape operator $A:T_pN \to T_pN$ is the symmetric $(1,1)$-tensor field defined by $A(X)=-\nabla^0_X U$, $X\in \x(N)$. Since $A$ is symmetric, there exists an orhonormal frame of eigenvectors $\{e_1,...,e_n\}$ of $T_pN$ with corresponding real eigenvalues $\kappa_1,...,\kappa_n$. 

Assume that both $M$ and $N$ are orientable and oriented, i.e., orientations are chosen on each of them. Then the vectors $e_i$ and the scalars $\kappa_i$ are called the principal directions and principal curvatures of $\phi$, respectively. Moreover, the quantity 
$$H=\frac{1}{n}(\kappa_1+\cdots+\kappa_m)$$ is called the mean curvature of $\phi$ \cite{car}.

The Gauss formula is given by
$$
\nabla^0_XY=\nabla_XY+\langle A(X),Y\rangle U, \quad X,Y \in\x(N),
$$
where $\nabla$ is the induced Levi-Civita connection on $N$. A hypersurface is called totally geodesic if $A=0$, and totally umbilical if $A=\delta \mbox{Id}$, for a smooth function $\delta$ on $N$. Also, the Codazzi equation is written by
$$
(R^0(X,Y)U)^{\top}=(\nabla_XA)Y-(\nabla_YA)X, \quad X,Y \in \x(N).
$$
If, in particular, $M$ is a space form, then the Codazzi equation reduces to 
$$(\nabla_XA)Y=(\nabla_YA)X, \quad X,Y \in \x(N).$$

Let $M_1\times M_2$ be the product of two Riemannian manifolds $(M_1,\langle,\rangle_1)$ and $(M_2,\langle,\rangle_2)$. Consider the natural projections $\pi_i : M_1 \times M_2 \to M_i,$ $i=1,2
$. A twisted product $M_1 \times_{\lambda} M_2$ is a Riemannian manifold $M_1 \times M_2$ equipped with the metric tensor $$\langle, \rangle=\langle, \rangle_1+\lambda^2\langle, \rangle_2
,$$ where $\lambda$ is a smooth function on $M$. If, in particular, the function $\lambda$ depends only on $M_1$ then $M_1 \times_{\lambda} M_2$ is called a  warped product and $\lambda$ warping function. Let $M=M_1 \times_{\lambda} M_2$. Then, $M_1$ and $M_2$ are called the base and the fiber of $M$, respectively \cite{cbook}.

The leaves $M_1 \times \{ q\} = \pi_2^{-1}(q)$ and the fibers $\{p \} \times M_2 = \pi_1^{-1}(p)$ are submanifolds of $M$. A vector tangent to leaves (respectively, to the fibers) is called  horizontal (respectively vertical). We denote by $\mathcal{L}(M_1)$ (respectively, $\mathcal{L}(M_2)$) the set of all horizontal (respectively vertical) lifts.

We recall the following useful result.

\begin{proposition} \cite{cbook} \label{warp-con}
Let $M=M_1 \times_{\lambda} M_2$ be a warped product, and let $\nabla^0$ and $\nabla^i$ be the Levi-Civita connections on $M$ and $M_i$ $(i=1,2)$, respectively. Let also $X_i,Y_i \in \mathcal{L}(M_i)$. Then,
\begin{enumerate}
\item[(i)] $\nabla^0_{X_1}Y_1 \in \mathcal{L}(M_1) $ is the lift of $\nabla^1_{X_1}Y_1$ on $M_1$.
\item[(ii)] $\nabla^0_{X_1}X_2 =\nabla^0_{X_2}X_1 =(X_1 \log \lambda)X_2$.
\item[(iii)] $\textnormal{tan}(\nabla^0_{X_2}Y_2)$ is the lift of $\nabla^2_{X_2}Y_2$ on $M_2$.
\item[(iv)] $\textnormal{nor}(\nabla^0_{X_2}Y_2)=-\frac{\langle X_2,Y_2\rangle}{\lambda}\nabla^0\lambda$.
\end{enumerate}
\end{proposition}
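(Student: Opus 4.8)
The plan is to derive all four identities from the Koszul formula for the Levi-Civita connection $\nabla^0$,
\begin{align*}
2\langle \nabla^0_X Y, Z\rangle &= X\langle Y,Z\rangle + Y\langle X,Z\rangle - Z\langle X,Y\rangle \\
&\quad + \langle[X,Y],Z\rangle - \langle[X,Z],Y\rangle - \langle[Y,Z],X\rangle,
\end{align*}
combined with the elementary behaviour of lifts on a warped product. First I would record the facts to be used repeatedly: horizontal and vertical lifts are orthogonal; $\langle X_1,Y_1\rangle$ is the pull-back of a function on $M_1$, whereas $\langle X_2,Y_2\rangle=\lambda^2\langle X_2,Y_2\rangle_2$ with $\langle X_2,Y_2\rangle_2$ the pull-back of a function on $M_2$; the warping function satisfies $X_2\lambda=0$; and the Lie brackets obey $[X_1,X_2]=0$, while $[X_1,Y_1]$ remains horizontal and $[X_2,Y_2]$ remains vertical. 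Throughout $X_i,Y_i,Z_i$ denote lifts from $M_i$.

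For (i) I would pair $\nabla^0_{X_1}Y_1$ with an arbitrary vertical $Z_2$: since $\langle Y_1,Z_2\rangle=\langle X_1,Z_2\rangle=0$ and $[X_1,Z_2]=[Y_1,Z_2]=0$, the Koszul formula annihilates every term, so $\nabla^0_{X_1}Y_1$ is horizontal. Pairing instead with a horizontal $Z_1$ reproduces verbatim the Koszul formula on $M_1$, because all inner products and brackets involved descend to $M_1$; hence $\nabla^0_{X_1}Y_1$ is the lift of $\nabla^1_{X_1}Y_1$. For (ii) the symmetry $\nabla^0_{X_1}X_2=\nabla^0_{X_2}X_1$ is immediate from $[X_1,X_2]=0$. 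Pairing $\nabla^0_{X_1}X_2$ with a horizontal $Z_1$ makes every Koszul term vanish (using orthogonality, $X_2$ acting on an $M_1$-function, and the vanishing brackets), so the result is vertical; pairing with a vertical $Z_2$, the only surviving term is $X_1\langle X_2,Z_2\rangle = X_1(\lambda^2)\langle X_2,Z_2\rangle_2 = 2(X_1\log\lambda)\langle X_2,Z_2\rangle$, which yields $\langle\nabla^0_{X_1}X_2,Z_2\rangle=(X_1\log\lambda)\langle X_2,Z_2\rangle$ for all $Z_2$, i.e. $\nabla^0_{X_1}X_2=(X_1\log\lambda)X_2$.

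For (iii)--(iv) I would split $\nabla^0_{X_2}Y_2$ into its vertical (tangential) and horizontal (normal) parts. Tested against a vertical $Z_2$, the common factor $\lambda^2$ and the identity $X_2\lambda=0$ let the Koszul formula collapse to the Koszul formula on $M_2$, so the tangential part is the lift of $\nabla^2_{X_2}Y_2$, which is (iii). Tested against a horizontal $Z_1$, only the term $-Z_1\langle X_2,Y_2\rangle = -Z_1(\lambda^2)\langle X_2,Y_2\rangle_2 = -\tfrac{2(Z_1\lambda)}{\lambda}\langle X_2,Y_2\rangle$ survives, giving $\langle\nabla^0_{X_2}Y_2,Z_1\rangle = -\tfrac{\langle X_2,Y_2\rangle}{\lambda}\langle\nabla^0\lambda,Z_1\rangle$; since $\lambda$ lives on $M_1$, its gradient $\nabla^0\lambda$ is horizontal, and this identifies the normal part as $-\tfrac{\langle X_2,Y_2\rangle}{\lambda}\nabla^0\lambda$, which is (iv).

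Each case is a single application of the Koszul formula, so no step is computationally heavy. The one place where real care is needed is the bookkeeping of the warping-function derivatives in (ii) and (iv); correspondingly, I expect the main point requiring the warped-product structure to be the verification of the lift properties invoked at the outset — that $[X_1,X_2]=0$ and that the unscaled inner product $\langle X_2,Y_2\rangle_2$ is annihilated by horizontal lifts — since everything else follows mechanically once those are in hand.
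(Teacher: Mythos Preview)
The paper does not supply its own proof of this proposition: it is quoted from \cite{cbook} and stated without argument. Your Koszul-formula derivation is correct and is in fact the standard proof one finds in the literature (e.g.\ O'Neill or Chen), so there is nothing to compare against and nothing to fix.
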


%%%%%%%%%%%%%%%%%%%%%%%%%%%%%%%%
\section{Hypersurfaces with anti-torqued axis} \label{anti}
%%%%%%%%%%%%%%%%%%%%%%%%%%%%%%%%

In this section, we consider the hypersurfaces in Riemannian manifolds whose axis is a unit torse-forming vector field. As emphasized in the Introduction, every unit torse-forming vector field is a unit anti-torqued vector field. Recall  the notion of an anti-torqued vector field $\v$ on a Riemannian manifold $M$. 

Let $\v$ be a torse-forming vector field defined by \eqref{int-tor}. Denote by $f$ and $\omega$ its conformal scalar and generating form, respectively. Let $\w$ be the generative of $\v$, that is, the vector field dual to the $1$-form $\omega$. The vector field $\v$ is called anti-torqued if $\w=-f\v$. Equivalently, this condition can be written as $\omega=-f\nu$, where $\nu$ is the $1$-form dual to $\v$. Then, from \eqref{int-tor} it follows that
$$
\nabla^0_X\v=f(X- \langle X,\v\rangle\v), \quad X\in\x(M).
$$
By its definition, the vector field $\v$ is not parallel and we assume that $f$ is nowhere zero on $M$.

Consider a smooth immersion $\phi:N\to M$ of an orientable hypersurface $N$ into $M$. Let $U$ be its unit normal vector field and $\v$ a unit anti-torqued vector field on $M$. In this section, we consider hypersurfaces satisfying 
\begin{equation}
\langle U, \v \rangle = \cos \theta, \quad \theta \in [0,\pi]. \label{def31}
\end{equation}
Notice that an anti-torqued vector field always can be assumed to be unitary (see \cite{dan}). 

Denote by $\v^{\top}$ the tangential projection of $\v$ on $N$. If $\v^{\top}= 0$, then the hypersurface $N$ is totally umbilical, see \cite[Corollary 4.2]{amc1}. 

\begin{definition}
Let $\v$ be a unit anti-torqued vector field globally defined on a Riemannian manifold $M$. An orientable hypersurface $N\subset M$ is called a {\it constant angle hypersurface with anti-torqued axis} $\v$ if $\v^{\top}$ is nonzero on $N$ and condition \eqref{def31} is satisfied.
\end{definition}

Assume that $N$ is a constant angle hypersurface in $M$ whose axis is a unit anti-torqued vector field $\v$. Hence, we set
$$
T=\frac{\v^{\top}}{|\v^{\top}|}.
$$ 
Then we have the following orthogonal decomposition of $\v$ in its tangential and normal components with respect to $N$
\begin{equation}\label{decomp}
\v=\sin \theta T +\cos \theta U, \quad \theta \in [0,\pi]. 
\end{equation}

We begin with a useful result that will be needed later in this section.

\begin{proposition} \label{p41}
Let $N$ be a constant angle hypersurface in a Riemannian manifold with anti-torqued axis $\v$. Then we have
\begin{eqnarray}
\sin \theta \nabla_X T -\cos \theta A(X)&=&fX-f\sin^2 \theta\langle X,T\rangle T, \label{31}\\ 
\langle A(X),T\rangle&=&-f\cos\theta\langle X,T\rangle,  \label{32}\\
A(T) &=&-f\cos\theta T, \label{33}\\
\nabla_T T&=&0, \label{34}
\end{eqnarray}
where $f$ is the conformal scalar of $\v$.
\end{proposition}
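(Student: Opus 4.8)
The plan is to compute the ambient covariant derivative $\nabla^0_X\v$ in two independent ways for an arbitrary $X\in\x(N)$ and then to match the tangential and normal components with respect to $N$. The first computation uses the defining equation of the anti-torqued field, $\nabla^0_X\v=f(X-\langle X,\v\rangle\v)$. Since $X$ is tangent to $N$, the decomposition \eqref{decomp} gives $\langle X,\v\rangle=\sin\theta\langle X,T\rangle$, and substituting \eqref{decomp} a second time yields
\begin{equation*}
\nabla^0_X\v=fX-f\sin^2\theta\langle X,T\rangle T-f\sin\theta\cos\theta\langle X,T\rangle U,
\end{equation*}
where the first two terms are tangent to $N$ and the last one is normal.

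The second computation differentiates \eqref{decomp} directly. Because $\theta$ is constant, $\sin\theta$ and $\cos\theta$ pass through the derivative, and applying the Gauss formula $\nabla^0_XT=\nabla_XT+\langle A(X),T\rangle U$ together with the Weingarten relation $\nabla^0_XU=-A(X)$ gives
\begin{equation*}
\nabla^0_X\v=\sin\theta\,\nabla_XT-\cos\theta\,A(X)+\sin\theta\langle A(X),T\rangle U .
\end{equation*}
Equating the tangential parts of the two expressions for $\nabla^0_X\v$ produces \eqref{31} immediately, while equating the normal parts gives $\sin\theta\langle A(X),T\rangle=-f\sin\theta\cos\theta\langle X,T\rangle$; since $\v^{\top}$ is nonzero we have $\sin\theta\neq0$, and cancelling it yields \eqref{32}.

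The remaining two identities then follow formally. Identity \eqref{33} comes from \eqref{32} by symmetry of the shape operator: for every $X$ one has $\langle A(T),X\rangle=\langle A(X),T\rangle=-f\cos\theta\langle X,T\rangle=\langle-f\cos\theta\,T,X\rangle$, whence $A(T)=-f\cos\theta\,T$. Finally, setting $X=T$ in \eqref{31} and using $|T|=1$ gives $\sin\theta\,\nabla_TT-\cos\theta\,A(T)=f\cos^2\theta\,T$; substituting \eqref{33} makes the left-over terms cancel and leaves $\sin\theta\,\nabla_TT=0$, hence \eqref{34}. There is no genuine obstacle beyond the bookkeeping of the tangential/normal splitting; the only point requiring care is that the standing hypothesis $\v^{\top}\neq0$ (equivalently $\theta\in(0,\pi)$) is precisely what guarantees $\sin\theta\neq0$, so that the cancellations producing \eqref{32} and \eqref{34} are legitimate.
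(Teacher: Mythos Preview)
Your proof is correct and follows essentially the same strategy as the paper: compute $\nabla^0_X\v$ both from the anti-torqued defining relation and from differentiating the decomposition \eqref{decomp} via the Gauss and Weingarten formulas, then compare tangential and normal parts to obtain \eqref{31} and \eqref{32}. You are simply more explicit than the paper about why $\sin\theta\neq 0$ may be cancelled and about how \eqref{33} and \eqref{34} are extracted (the paper merely asserts they ``follow directly'' from \eqref{31} and \eqref{32}).
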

\begin{proof}
By the assumption, we have decomposition \eqref{decomp}. By using the Gauss formula, we derive
$$
\nabla^0_X \v =\sin \theta \nabla_X T -\cos \theta A(X)+\sin \theta \langle A(X),T\rangle U, \quad X\in  \mathfrak{X}(N).
$$
Because $\v$ is an anti-torqued vector field, it follows
$$
\nabla^0_X \v =fX-f\sin \theta\langle X,T\rangle (\sin \theta T +\cos \theta U), \quad X\in  \mathfrak{X}(N).
$$
Comparing the tangent and normal components, we obtain \eqref{31} and \eqref{32}. The proofs of \eqref{33} and \eqref{34} follow directly from \eqref{31} and \eqref{32}.
\end{proof}

From now on, we distinguish two cases: $\mbox{dim}(M)=3$ and $\mbox{dim}(M)>3$. This is essential because, in \eqref{decomp}, the dimension of the orthogonal complement $T^{\perp} \subset \x(N)$ is either $1$ or greather than $1$, which effects our arguments.

\subsection{Case $\mbox{dim}(M)=3$} Denote by $M^3$ a $3$-dimensional Riemannian manifold. In the following result, we have a classification of constant angle surfaces in $M^3$.

\begin{theorem} \label{ts1}
Let $N$ be a constant angle surface in a complete Riemannian manifold $ M^3$ with anti-torqued axis $\v$. Then it is locally either a ruled surface or admits a local coordinate system $(s,t)$ whose metric takes the form 
\begin{equation}
\langle,\rangle_N=ds^2+\lambda^2dt^2, \quad (\log \lambda)_s=\frac{\cos\theta\kappa_2+f}{\sin\theta}, \label{tw}
\end{equation}
where $f$ is the conformal scalar of $\v$ and $\kappa_2$ is the principal curvature along $ \partial_t$.
\end{theorem}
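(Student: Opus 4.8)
The plan is to work in the orthonormal frame adapted to the decomposition \eqref{decomp}. Since $\dim(M^3)=3$, the surface $N$ is two dimensional, so $T^{\perp}\subset\x(N)$ is a line field; I would pick a unit field $Z$ spanning it, giving an orthonormal frame $\{T,Z\}$ of $TN$. From Proposition \ref{p41} I would read off the full structure of this frame. Equation \eqref{33} says $A(T)=-f\cos\theta\,T$, so $T$ is a principal direction with curvature $\kappa_1=-f\cos\theta$; taking $X=Z$ in \eqref{32} gives $\langle A(Z),T\rangle=0$, so by symmetry $A(Z)=\kappa_2 Z$ and $Z$ is the second principal direction. Putting $X=Z$ in \eqref{31} and using $\langle Z,T\rangle=0$ yields $\sin\theta\,\nabla_Z T=(f+\cos\theta\,\kappa_2)Z$, i.e. $\nabla_Z T=\mu Z$ with $\mu:=\frac{f+\cos\theta\,\kappa_2}{\sin\theta}$. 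Since $\{T,Z\}$ is orthonormal and $\nabla_T T=0$ by \eqref{34}, differentiating the relations $\langle T,T\rangle=\langle Z,Z\rangle=1$ and $\langle T,Z\rangle=0$ pins down the remaining coefficients $\nabla_T Z=0$ and $\nabla_Z Z=-\mu T$.

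The dichotomy in the statement I would extract from the ambient behaviour of the $T$-curves. By the Gauss formula and \eqref{33}, $\nabla^0_T T=\langle A(T),T\rangle U=-f\cos\theta\,U$. As $f$ is nowhere zero, the integral curves of $T$ are geodesics of $M$ exactly when $\cos\theta=0$; in that case $\v=T$ is tangent to $N$ and $N$ is foliated by these ambient geodesics, so it is locally a ruled surface, and completeness guarantees the rulings extend to complete geodesics. This is the first alternative.

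For the second alternative I would build semi-geodesic coordinates. Because $\nabla_T T=0$ by \eqref{34}, the integral curves of $T$ are unit-speed geodesics of $N$; flowing them from an initial curve chosen orthogonal to $T$ produces coordinates $(s,t)$ with $\partial_s=T$, hence $g_{ss}=1$. To see that the metric is diagonal I would run the Gauss-lemma computation $\partial_s g_{st}=\langle\nabla_{\partial_s}\partial_s,\partial_t\rangle+\langle\partial_s,\nabla_{\partial_s}\partial_t\rangle$; the first term vanishes since $\nabla_{\partial_s}\partial_s=\nabla_T T=0$, and the second equals $\tfrac12\partial_t g_{ss}=0$ after using $\nabla_{\partial_s}\partial_t=\nabla_{\partial_t}\partial_s$. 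Thus $g_{st}$ is constant along each $T$-geodesic and vanishes by the initial orthogonality, so $\langle,\rangle_N=ds^2+\lambda^2dt^2$ with $\lambda^2=g_{tt}$ and $\partial_t=\lambda Z$ (fixing orientation). Finally, comparing $\nabla_{\partial_s}\partial_t=\nabla_T(\lambda Z)=\lambda_s Z$ (using $\nabla_T Z=0$) with $\nabla_{\partial_t}\partial_s=\lambda\,\nabla_Z T=\lambda\mu Z$ gives $\lambda_s=\lambda\mu$, that is $(\log\lambda)_s=\mu=\frac{\cos\theta\,\kappa_2+f}{\sin\theta}$, which is \eqref{tw}.

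I expect the main obstacle to be the clean separation of the two cases rather than any single computation: one must argue that outside the ruled locus $\cos\theta=0$ the semi-geodesic construction genuinely produces the warped form, and in particular justify the vanishing $g_{st}=0$, which is precisely what makes the metric diagonal and the warping condition \eqref{tw} meaningful. A secondary point to handle carefully is the consistent orientation of $Z$, and hence the sign of $\kappa_2$, so that $\partial_t=\lambda Z$ and $\kappa_2$ is unambiguously the principal curvature along $\partial_t$ as stated.
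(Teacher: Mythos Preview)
Your proposal is correct and follows essentially the same route as the paper's proof: both set up the orthonormal principal frame $\{T,Z\}=\{e_1,e_2\}$, extract $\kappa_1=-f\cos\theta$, $\nabla_T T=0$, and $\nabla_Z T=\sigma Z$ from Proposition~\ref{p41}, treat the case $\cos\theta=0$ via the Gauss formula to get the ruled conclusion, and in the remaining case pass to semi-geodesic coordinates $(\partial_s,\partial_t)=(T,\lambda Z)$ and compute $(\log\lambda)_s=\sigma$ from the symmetry $\nabla_{\partial_s}\partial_t=\nabla_{\partial_t}\partial_s$. Your Gauss-lemma argument for $g_{st}\equiv 0$ is a welcome explicit justification of a step the paper simply asserts.
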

\begin{proof}
Consider the orthonormal frame of principal directions $\{e_1,e_2\}$ of $\x(N)$ such that $T=e_1$ and $A(e_i)=\kappa_ie_i$. Then, using \eqref{33} and \eqref{34} we have $\kappa_1=-f\cos\theta $ and $\nabla_{e_1}e_1 =0.$ Hence, the shape operator takes the form
$$
A=\begin{pmatrix}
-f\cos\theta&0  \\ 
0& \kappa_2
\end{pmatrix}.%
$$
If $\cos\theta=0$, then by the Gauss formula we derive $\nabla^0_{e_1}e_1=0$, which means that the integral curves of $e_1$ are the geodesics of the ambient space $M^3$. In other words, the integral curves of $\mbox{span}\{e_1\}$ is a foliation of $N$ generated by the ambient geodescis; hence $N$ is a ruled surface in $M^3$ (see \cite{lrs}). 

Assume now that $\cos\theta\neq0$. Writing $X=e_2$ in \eqref{31} yields $\nabla_{e_2}e_1 =(\frac{\cos\theta\kappa_2 +f}{\sin\theta})e_2. $ From $\nabla_{e_1}e_1 =0$, it follows that $0=e_1 \langle e_1,e_2\rangle=\langle e_1,\nabla_{e_1}e_2 \rangle.$ Since $\nabla_{e_1}e_2$ is perpendicular to both $e_1$ and $e_2$, we have $\nabla_{e_1}e_2=0$. Similarly, from $0=e_2 \langle e_1,e_2\rangle$, we obtain $\nabla_{e_2}e_2 =-(\frac{\cos\theta\kappa_2 +f}{\sin\theta})e_1. $ Consequently, we conclude
\begin{equation}
\nabla_{e_1}e_1 =\nabla_{e_1}e_2=0, \quad \nabla_{e_2}e_1 =\sigma e_2, \quad \nabla_{e_2}e_2 =-\sigma e_1, \quad \sigma=\frac{\cos\theta\kappa_2 +f}{\sin\theta}. \label{s-1}
\end{equation}
Since the integral curves of $e_1$ are geodesics of $N$ we can choose a coordinate system $(s,t)$ on $N$ such that 
\begin{equation*} 
e_1=\partial_s:=\frac{\partial }{\partial s}, \quad \partial_t=\lambda(s,t)e_2,
\end{equation*}
for a smooth function $\lambda$ on $N$. Moreover, since $\nabla_{\partial_s}\partial_t=\nabla_{\partial_t}\partial_s$, after using \eqref{s-1} we derive
$$
\partial_s\langle \partial_t,\partial_t\rangle=2\langle \nabla_{\partial_t}\partial_s,\partial_t\rangle,
$$
which yields $(\lambda^2)_s=2\lambda^2\sigma$. Equivalently, we obtain $(\log \lambda)_s=\sigma$, completing the proof.
\end{proof}

In what follows, we provide an example of a constant angle surface with anti-torqued axis $\v$ whose metric is given by \eqref{tw} in the case the ambient space is a warped product $M^3=I\times_p F$, with $I\subset \r$ a real interval and $F$ a $2$-dimensional Riemannian manifold. For this, recall that if $u$ is a coordinate on $I$, then $\v=\partial_u$ is a unit anti-torqued vector field on $M^3$ whose conformal scalar is $f=(\log p)'$ and generating form $\omega=-f\nu$, where $\nu$ is the dual $1$-form of the lift to $M^3$ of $\partial_u$ (see \cite[Theorem 3.1]{amc1}).

\begin{example}\label{wapr}
Let $M^3=I\times_p \r^2$ be a $3$-dimensional warped product endowed with the metric 
$$
\langle , \rangle=du^2+p^2(u)(dx^2+dy^2),
$$ 
where $(x,y)$ are the canonical coordinate system of $\r^2$. The constant angle surfaces in $M^3$ with axis $\v=\partial_u$ were classified in \cite{dmvv}. Assume that $p(u)=u$. We consider the surface $N$ in $M^3$ parametrized by 
$$
\phi(s,t)=(\sin \theta s,\cot\theta \log s,t), \quad s>0.
$$
Its unit normal vector field is $U=(\cos\theta,-\frac1s,0)$ and hence $\langle U,\v \rangle=\cos\theta$. The first fundamental form of $N$ is given by
\begin{equation}
\langle , \rangle_N=ds^2+\lambda^2dt^2, \quad \lambda(s)=\sin\theta s, \label{43}
\end{equation}
which is a warped metric. Also, $N$ is a totally umbilical surface whose principal curvatures are
$$
\kappa_1(s)=\kappa_2(s)=-\frac{\cos\theta}{u\circ \phi(s,t)}=-\frac{\cot\theta}{s}=-\cos\theta f(s),
$$
where $f(s)=(f\circ\phi)(s,t)=\frac{1}{\sin \theta s}$. It is easy to see that the warping function $\lambda$ in \eqref{43} satisfies the condition in \eqref{tw}.
\end{example}

It is worth investigating the particular case when $M^3$ is one of the standard models of a complete simply-connected space form. Notice that the global existence of the anti-torqued vector field $\v$ implies that $M^3$ is either $\r^3$ or $\h^3$.
Using the Codazzi equation and \eqref{s-1}, we have
$$
0=(\nabla_{e_1}A)(e_2)-(\nabla_{e_2}A)(e_1)=-e_2(\kappa_1)e_1+(e_1(\kappa_2)+(\kappa_2-\kappa_1)\sigma)e_2,
$$
or equivalently
$$
e_2(\kappa_1)=0, \quad e_1(\kappa_2)=(\kappa_1-\kappa_2)\sigma.
$$
Since $\kappa_1=-f\cos\theta$ and $\partial_t=\lambda e_2$, we have $\partial_t f=0$. Then, $f|_N=f(s)$ and hence
\begin{eqnarray}
\kappa_1(s)&=&-\cos\theta f(s),\label{35}\\ 
\partial_s\kappa_2(s,t)&=&-\sin^{-1}\theta(\cos\theta f(s)+\kappa_2(s,t))(\cos\theta\kappa_2(s,t) +f(s) ) \label{36}.
\end{eqnarray}

If $M^3=\h^3$, then $f$ is necessarily a nonzero constant on $M^3$, say $f=f_0$ (see \cite[Theorem 4.2]{amc}) and the surface $N$ has one constant principal curvature (see Example \ref{hsur}), which is completely described in \cite{ag}. 

Therefore we have proved the following results.

\begin{corollary} \label{cs1}
Let $N$ be a constant angle surface in $\r^3$ with anti-torqued axis, where $\cos\theta\neq 0$. Then, the principal curvatures of $N$ satisfy \eqref{35} and \eqref{36}.
\end{corollary}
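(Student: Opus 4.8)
The plan is to specialize the frame computation already established for $M^3$ to the flat case $M^3=\r^3$ and feed it into the Codazzi equation. I would start from the orthonormal principal frame $\{e_1,e_2\}$ with $T=e_1$ and $A(e_i)=\kappa_i e_i$, together with the connection coefficients recorded in \eqref{s-1} and the relation $\kappa_1=-f\cos\theta$ coming from \eqref{33}. Since $\r^3$ is a space form of curvature zero, the Codazzi equation collapses to $(\nabla_X A)Y=(\nabla_Y A)X$, so in particular $(\nabla_{e_1}A)(e_2)=(\nabla_{e_2}A)(e_1)$.

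The first computational step is to expand both covariant derivatives using $(\nabla_X A)Y=\nabla_X(AY)-A(\nabla_X Y)$. Using $\nabla_{e_1}e_2=0$ gives $(\nabla_{e_1}A)(e_2)=e_1(\kappa_2)e_2$, while $\nabla_{e_2}e_1=\sigma e_2$ gives $(\nabla_{e_2}A)(e_1)=e_2(\kappa_1)e_1+(\kappa_1-\kappa_2)\sigma e_2$. Equating the two and separating the $e_1$- and $e_2$-components yields the two scalar identities $e_2(\kappa_1)=0$ and $e_1(\kappa_2)=(\kappa_1-\kappa_2)\sigma$.

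Next I would convert these into the stated ODE system. Since $\cos\theta$ is constant, $e_2(\kappa_1)=0$ together with $\kappa_1=-f\cos\theta$ forces $e_2(f)=0$; because $\partial_t=\lambda e_2$, this means $\partial_t f=0$, so $f|_N$ and hence $\kappa_1$ depend only on $s$, which is exactly \eqref{35}. For \eqref{36}, I would substitute $e_1=\partial_s$, the value $\sigma=(\cos\theta\,\kappa_2+f)/\sin\theta$, and $\kappa_1-\kappa_2=-(\cos\theta\, f+\kappa_2)$ into the second identity, and the product structure of \eqref{36} then drops out immediately.

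I do not expect a genuine obstacle here, since all the structural work was already done in Proposition \ref{p41} and in deriving \eqref{s-1}; the corollary is really just the flat specialization of that machinery. The only points requiring care are the bookkeeping in the covariant derivative of the shape operator --- one must keep $A(\nabla_{e_2}e_1)=\sigma A(e_2)=\sigma\kappa_2 e_2$ separate from $\kappa_1\nabla_{e_2}e_1=\kappa_1\sigma e_2$ so that the factor $(\kappa_1-\kappa_2)\sigma$ emerges with the correct sign --- and the observation that $e_2(f)=0$, which is precisely what licenses treating $f$, and therefore $\kappa_1$, as a function of $s$ alone.
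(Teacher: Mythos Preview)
Your proposal is correct and follows essentially the same route as the paper: both apply the space-form Codazzi equation $(\nabla_{e_1}A)(e_2)=(\nabla_{e_2}A)(e_1)$ in the principal frame, use the connection formulas \eqref{s-1} to extract $e_2(\kappa_1)=0$ and $e_1(\kappa_2)=(\kappa_1-\kappa_2)\sigma$, and then convert these into \eqref{35} and \eqref{36} via $\kappa_1=-f\cos\theta$ and $e_1=\partial_s$, $\partial_t=\lambda e_2$. The only implicit step worth making explicit is that dividing out $\cos\theta$ to pass from $e_2(\kappa_1)=0$ to $e_2(f)=0$ uses the hypothesis $\cos\theta\neq 0$.
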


\begin{corollary} \label{cs2}
Let $N$ be a constant angle surface in $\h^3$ with anti-torqued axis, where $\cos\theta\neq 0$. Then, $N$ has one principal curvature constant and the other satisfies \eqref{36} with constant $f$.
\end{corollary}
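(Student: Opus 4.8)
The plan is to reduce the statement to the two facts already assembled in the discussion preceding the corollary: the relations \eqref{35} and \eqref{36}, which were derived for a three-dimensional space form using only that the Codazzi equation there reads $(\nabla_X A)Y=(\nabla_Y A)X$, together with the rigidity of the conformal scalar in $\h^3$. The entire content then amounts to feeding the constancy of $f$ into these two relations.

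First I would note that \eqref{35} and \eqref{36} are available verbatim in the hyperbolic case. Indeed, the computation leading to Corollary \ref{cs1} used the structure equations \eqref{s-1}, the identity $\kappa_1=-f\cos\theta$ coming from \eqref{33}, and the reduced Codazzi identity; none of these steps depends on the value of the ambient sectional curvature, so they apply equally when $M^3=\h^3$. This yields $e_2(\kappa_1)=0$ and $e_1(\kappa_2)=(\kappa_1-\kappa_2)\sigma$, and, after passing to the coordinates $(s,t)$, exactly \eqref{35} and \eqref{36} with $f|_N=f(s)$.

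Next I would invoke the rigidity of the axis. Comparing $R^0(X,Y)\v$, computed from $\nabla^0_X\v=f(X-\langle X,\v\rangle\v)$, with the space-form curvature tensor of constant curvature $c$ forces $Xf=0$ for every $X$ orthogonal to $\v$, together with a first-order equation of the form $\v(f)=-f^2-c$ along the flow of $\v$. On complete $\h^3$ the only globally defined, nowhere vanishing solution of this equation is constant. This is precisely \cite[Theorem 4.2]{amc}, which I would cite rather than reprove, writing $f\equiv f_0$ with $f_0\neq 0$.

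With $f\equiv f_0$ the conclusion is immediate: \eqref{35} gives $\kappa_1(s)=-\cos\theta\, f_0$, a constant principal curvature, while \eqref{36} becomes the asserted ordinary differential equation for $\kappa_2$ with the constant value $f=f_0$ substituted. The only genuine obstacle is the rigidity step; everything else is substitution. If one wished to keep the argument self-contained rather than citing \cite{amc}, the main work would be the curvature computation above and the observation that, since $f$ is assumed nowhere zero, the non-constant solutions of $\v(f)=-f^2-c$ are excluded on complete $\h^3$ (the $\tanh$-type solution has a zero, the $\coth$-type is not globally defined), leaving only the constant equilibria $f=\pm\sqrt{-c}$.
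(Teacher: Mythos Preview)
Your proposal is correct and follows essentially the same route as the paper: apply the Codazzi computation that yields \eqref{35}--\eqref{36} (valid in any space form), then invoke \cite[Theorem~4.2]{amc} to conclude that $f\equiv f_0$ on $\h^3$, so that $\kappa_1=-\cos\theta\,f_0$ is constant and $\kappa_2$ satisfies \eqref{36} with constant $f$. Your additional sketch of the ODE argument behind the cited rigidity result is not in the paper (which simply cites \cite{amc}), but it does not change the approach.
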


\begin{remark}
If in particular $N$ is minimal in $\r^3$, then $\kappa_1(s)=-\kappa_2(s)=-\cos\theta f(s)$. Hence, solving \eqref{36} we obtain that $f|_N=f(s)=\frac{1}{2\cot\theta s+c}$ with $c\in \r$.
\end{remark}

We construct an example of constant angle surface in $\r^3$ with anti-torqued axis $\v$. Such an axis can be derived by using the position vector field $\Phi$ of $\r^m$. Then, $\v=\frac{\Phi}{|\Phi|}$ is a unit anti-torqued vector field on the connected Riemannian manifold $\r^m\setminus \{0\}$ (see \cite{dan}). 

\begin{example}
Let $\Phi$ be the position vector field on $\r^3$. Take the unit anti-torqued vector field $\v=\frac{\Phi}{|\Phi|}$ on $\r^3\setminus\{0\}$ whose conformal scalar is $f=\frac{1}{|\Phi|}$. The surfaces in $\r^3\setminus\{0\}$ whose normal vector field make constant angle with $\v$ are known as {\it constant slope surfaces}, introduced by Munteanu in \cite{mun}. These surfaces were completely described in the cited paper. Consider the constant slope surface $N\subset \r^3\setminus\{0\}$ parametrized by
$$
\phi(s,t)=s\sin\theta(\cos(\alpha(s))\cos(t),\cos(\alpha(s))\sin(t),\sin(\alpha(s))), \quad s>0,
$$
where $\alpha(s)=\cot \theta \log(s)$. Notice that the restriction of $f$ on $N$ is $f(s)=\frac{1}{s\sin(\theta)}$. The principal curvatures are 
$$
\kappa_1(s)=-\frac{\cot\theta}{s}=-\cos\theta f(s)
$$
and
$$
\kappa_2(s)=-\frac{1}{s}(\cot\theta+\tan(\cot\theta\log s)).
$$
It is clear that \eqref{35} holds for $\kappa_1$. After straightforward computations, we also have the verification of \eqref{36}. Finally, we observe that the first fundamental form of $\phi$ is given by $ds^2+(\sin\theta s\cos(\alpha(s)))^2dt^2,$ which represents a warped product structure. This follows from the fact that both principal curvatures  and the function $f\circ \phi$ depend only on $s$. Setting $\lambda(s)=\sin\theta s\cos(\alpha(s))$, we conclude that the warping function $\lambda $ satisfies the condition in \eqref{tw}.
\end{example}

We next present an example of constant angle surfaces in the upper-half space model of $\h^3$ with anti-torqued axis. Let $(x_1,...,x_m)$ be the canonical coordinates of $\r^m$. This model is given by $\h^m=\{(x_1,...,x_m):x_m>0\}$ endowed with the conformal metric $\langle,\rangle_h=\frac{1}{x_m^2}\langle , \rangle_e$, where $\langle , \rangle_e$ is the Euclidean metric. Then $\v=-x_m\partial_m$, where $\partial_m=\frac{\partial}{\partial x_m}$, is a unit anti-torqued vector field with conformal scalar $f=1$ (see \cite{amc}).

\begin{example}\label{hsur}
Let $(x,y,z)$ be the canonical coordinates of $\r^3$ and $N$ the right cone $z=k\sqrt{x^2+y^2}$ with $k>0$. If $U_e$ denotes the Euclidean unit normal to $N$, then, due to the conformal hyperbolic metric $\langle , \rangle_h$, $U_h=zU_e$ becomes the corresponding unit normal to $N$ in $\h^3$. Since $\langle U_e,\partial_z\rangle_e =\frac{1}{\sqrt{1+k^2}}$, it follows that 
$$\langle U_h,-z\partial_z \rangle_h=\frac{-1}{\sqrt{1+k^2}}=\cos\theta, \quad \theta \in [0,\pi],
$$ 
which implies that the cone $N$ is a constant angle surface in $\h^3$ with anti-torqued axis $\v=-z\partial_z$. Let $\kappa_i^e$ and $\kappa_i^h$ ($i=1,2$) denote the principal curvatures of $N$ in $\r^3$ and $\h^3$, respectively. Then, $\kappa_i^h=z\kappa_i^e+\langle U_e,\partial_z\rangle_e$ (see \cite{gss}), yielding that
$$
\kappa_1^h=-\cos\theta, \quad \kappa_2^h=\sqrt{1+k^2}.
$$
It is easy to see that the principal curvatures $\kappa_i^h$ satisfy \eqref{35} and \eqref{36}.
\end{example}

\subsection{Case $\mbox{dim}(M)>3$} Similar to Theorem \ref{ts1}, we aim to classify constant angle hypersurfaces $N$ in  Riemannian manifolds with anti-torqued axis. For this, we state the following preliminary result.

\begin{proposition} \label{phs1}
Let $N$ be a constant angle hypersurface in a complete Riemannian manifold $M$, $\textnormal{dim}(M)>3$, with anti-torqued axis $\v$, and let $T$ be the unit tangential component of $\v$ with respect to $N$. Then $N$ is either a ruled hypersurface, or admits an orthogonal decomposition $TN=\textnormal{span}\{T\}\oplus \mathcal{D}$, where the integral curves of $T$ are geodesics of $N$ and $ \mathcal{D} =\textnormal{span}\{T\}^\perp$ is an integrable distribution.
\end{proposition}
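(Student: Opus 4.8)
The plan is to extract the entire statement directly from Proposition \ref{p41}, splitting the analysis according to whether $\cos\theta$ vanishes, exactly as in the surface case treated in Theorem \ref{ts1}.

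First, independently of any case distinction, equation \eqref{34} gives $\nabla_T T = 0$, so the integral curves of $T$ are geodesics of $N$; this already records one half of the claimed decomposition. To obtain the ruled alternative, I would assume $\cos\theta = 0$. Then \eqref{33} forces $A(T) = 0$, so $\langle A(T), T\rangle = 0$, and the Gauss formula gives
$$\nabla^0_T T = \nabla_T T + \langle A(T), T\rangle\, U = 0.$$
Hence the integral curves of $T$ are geodesics of the ambient manifold $M$. They therefore foliate $N$ by ambient geodesics, so that $N$ is a ruled hypersurface (completeness of $M$ is what guarantees these geodesics extend globally).

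Assume now $\cos\theta \neq 0$. The orthogonal splitting $TN = \textnormal{span}\{T\} \oplus \mathcal{D}$ with $\mathcal{D} = \textnormal{span}\{T\}^{\perp}$ is automatic, so the only real content is the integrability of $\mathcal{D}$. By the Frobenius theorem it suffices to show $\langle [X,Y], T\rangle = 0$ for all $X, Y \in \mathcal{D}$. Writing $[X,Y] = \nabla_X Y - \nabla_Y X$ and using $\langle X, T\rangle = \langle Y, T\rangle = 0$ together with metric compatibility, I would reduce this to
$$\langle [X,Y], T\rangle = \langle X, \nabla_Y T\rangle - \langle Y, \nabla_X T\rangle.$$
For $X \in \mathcal{D}$ equation \eqref{31} collapses (the $\langle X,T\rangle T$ term drops out) to $\sin\theta\, \nabla_X T = \cos\theta\, A(X) + fX$, and since $\v^{\top} \neq 0$ we have $\sin\theta \neq 0$, so $\nabla_X T = \frac{1}{\sin\theta}\left(\cos\theta\, A(X) + fX\right)$. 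Substituting this expression for both $\nabla_X T$ and $\nabla_Y T$, the symmetry of the shape operator gives $\langle X, A(Y)\rangle = \langle Y, A(X)\rangle$ and the symmetry of the metric gives $\langle X, Y\rangle = \langle Y, X\rangle$, so both contributions cancel and $\langle [X,Y], T\rangle = 0$.

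The main --- and essentially only --- obstacle is this integrability step, but it dissolves once one pits the antisymmetry of the Lie bracket against the symmetry of $A$; no curvature identity is needed here, completeness entering only to interpret the $\cos\theta = 0$ branch as a genuine ruled hypersurface. I note also that the integrability computation never actually used $\cos\theta \neq 0$, so the decomposition persists in the ruled case as well; the dichotomy in the statement merely highlights the stronger geodesic-foliation conclusion available when $\cos\theta = 0$.
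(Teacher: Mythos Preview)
Your proof is correct and follows essentially the same route as the paper: both use \eqref{34} for the geodesic property of $T$, \eqref{33} together with the Gauss formula for the ruled conclusion when $\cos\theta=0$, and \eqref{31} plus the self-adjointness of $A$ to verify the Frobenius criterion for $\mathcal{D}$. The only cosmetic difference is that you first solve \eqref{31} for $\nabla_X T$ and then take inner products, whereas the paper takes the inner product of \eqref{31} with $Y$ directly; your final remark that the integrability step does not actually require $\cos\theta\neq 0$ is a valid observation not made explicit in the paper.
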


\begin{proof}
By the assumption, we have the decomposition given by \eqref{decomp}. From \eqref{34}, it follows $\nabla_TT=0$, i. e., $\mbox{span}\{T\}$ is an integrable distribution whose leaves are totally geodesic in $N$. If in addition $\cos\theta=0$, then using \eqref{33} and the Gauss formula we have $\nabla^0_TT=0$, which proves that $N$ is a ruled hypersurface in $M$. We next assume that $\cos\theta \neq 0$. Let $\mathcal{D}=\mbox{span}\{T\}^{\perp}$ be the orthogonal distribution to $\mbox{span}\{T\}$. Then, for $X,Y \in \mathcal{D}$, it follows from \eqref{31} that 
$$
 \sin \theta \langle \nabla_X T,Y \rangle -\cos \theta \langle A(X),Y\rangle =f\langle X,Y \rangle.
$$
On the other hand, because $\langle X  ,T\rangle=\langle Y  ,T\rangle=0$, we have
$$
\langle \nabla_X Y ,T\rangle-\langle \nabla_Y X ,T\rangle=-\langle \nabla_X T ,Y\rangle+\langle \nabla_Y T ,X\rangle.
$$
Together with the self-adjointness of $A$, this yields
$$
\langle [X, Y] ,T\rangle=0, \quad X,Y \in \mathcal{D}.
$$
Therefore, $\mathcal{D}$ is involutive, and by the Frobenius Theorem, it is integrable. 
\end{proof}

\begin{remark}
If $\mbox{dim}(N)=2$, due to \eqref{33}, the restriction of the shape operator $A$ on $\mathcal{D}=\mbox{span}\{T\}^{\perp}$, denoted by $A|_{\mathcal{D}}$, is trivially a multiple of the identity. Otherwise, $\mbox{dim}(N)>2$, this does not hold in general. However, as seen Example \ref{ehs}, we may impose such an extra condition in order to state a better classification result. 
\end{remark}

\begin{theorem}\label{ths1}
Let $N$ be a constant angle hypersurface in a complete Riemannian manifold $M$, $\textnormal{dim}(M)>3$, with anti-torqued axis $\v$. If $\mathcal{D}$ is the orthogonal distribution to $\textnormal{span}\{T\}$ and $A|_{\mathcal{D}}=\delta \textnormal{Id}$ with some function $\delta$, then $N$ is locally either a ruled surface or the twisted product of a real interval and a Riemannian manifold whose twisting function $\lambda$ satisfies $(\log \lambda)'=\cot\theta\delta-\frac{f}{\sin\theta}$, where $f$ is the conformal scalar of $\v$. If in addition $M=\r^m$ or $M=\h^m$, then $N$ is a warped product.
\end{theorem}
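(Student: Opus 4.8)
The plan is to branch on $\cos\theta$ exactly as in Proposition~\ref{phs1}: when $\cos\theta=0$ the hypersurface is ruled, so throughout I assume $\cos\theta\neq 0$ and work with the orthogonal decomposition $TN=\mathrm{span}\{T\}\oplus\mathcal{D}$ supplied by that proposition, in which the integral curves of $T$ are geodesics of $N$ (so the foliation $\mathrm{span}\{T\}$ is totally geodesic by \eqref{34}) and $\mathcal{D}$ is integrable. The whole argument then reduces to reading off the second fundamental forms of the two complementary foliations and invoking the standard foliation characterization of (twisted) products.

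First I would feed the umbilicity hypothesis into \eqref{31}. For $X\in\mathcal{D}$ one has $\langle X,T\rangle=0$, so \eqref{31} collapses to $\sin\theta\,\nabla_X T-\cos\theta\,A(X)=fX$; with $A(X)=\delta X$ this gives $\nabla_X T=\sigma X$, where $\sigma=\frac{\cos\theta\,\delta+f}{\sin\theta}$ is the higher-dimensional analogue of the quantity $\sigma$ in \eqref{s-1}. Consequently, for $X,Y\in\mathcal{D}$, $\langle\nabla_X Y,T\rangle=-\langle Y,\nabla_X T\rangle=-\sigma\langle X,Y\rangle$, so the $T$-component of $\nabla_X Y$ is $-\sigma\langle X,Y\rangle T$. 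Hence the leaves of $\mathcal{D}$ are totally umbilical in $N$ with mean curvature vector $-\sigma T$. Now $N$ carries two orthogonal complementary foliations, one totally geodesic and one totally umbilical, so by the Ponge--Reckziegel characterization of twisted products $N$ is locally a twisted product $I\times_\lambda F$, with $I$ an integral curve of $T$ (arc length $s$) and $F$ a leaf of $\mathcal{D}$. Comparing the umbilicity factor $-\sigma$ with the fiber mean curvature $-(\log\lambda)_s\,T$ of a twisted product identifies the twisting function through $(\log\lambda)'=\sigma$, matching the relation in \eqref{tw}.

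For the final claim I would specialise $M$ to a space form, where Codazzi reads $(\nabla_X A)Y=(\nabla_Y A)X$. Taking $X=T$, $Y\in\mathcal{D}$ and using $A(T)=\kappa_1 T$ with $\kappa_1=-f\cos\theta$, $A|_{\mathcal{D}}=\delta\,\mathrm{Id}$, $\nabla_Y T=\sigma Y$, together with $\nabla_T Y\in\mathcal{D}$ (since $\langle\nabla_T Y,T\rangle=-\langle Y,\nabla_T T\rangle=0$), the $T$-component of the Codazzi identity yields $Y(\kappa_1)=0$, hence $Y(f)=0$ for all $Y\in\mathcal{D}$, just as $\partial_t f=0$ was obtained in the surface case. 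Taking instead $X,Y\in\mathcal{D}$ gives $X(\delta)Y=Y(\delta)X$; since $\dim\mathcal{D}=\dim M-2\geq 2$ here, choosing linearly independent $X,Y$ forces $X(\delta)=0$ on $\mathcal{D}$. Thus $\sigma$ is constant along the fibers, so the normal connection of $\mathcal{D}$ annihilates its mean curvature vector and the umbilical foliation $\mathcal{D}$ is in fact spherical; by Hiepko's theorem $N$ is locally a warped product, i.e. $\lambda=\lambda(s)$.

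The step I expect to be delicate is the passage from the two foliations to the product structure: one must verify that the Ponge--Reckziegel (local) and Hiepko (completeness-using) hypotheses are genuinely met, and in particular that in the space-form case $\mathcal{D}$ is not merely umbilical but spherical, so that the twisted product really upgrades to a warped one. By contrast, the Codazzi manipulations are routine once the components of $A$ and the connection terms $\nabla_X T=\sigma X$, $\nabla_T Y\in\mathcal{D}$ are in place.
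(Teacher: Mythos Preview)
Your proposal is correct and follows the paper's approach almost verbatim: invoke Proposition~\ref{phs1} for the splitting, feed the umbilicity hypothesis into \eqref{31} to obtain $\nabla_X T=\sigma X$ on $\mathcal{D}$, apply the Ponge--Reckziegel characterization to get the twisted product, and in the space-form case use Codazzi on $X,Y\in\mathcal{D}$ to force $X(\delta)=0$ so that the structure upgrades to a warped product. The only deviation is how you obtain $Y(f)=0$ for $Y\in\mathcal{D}$: you read it off the $T$-component of Codazzi with $X=T$, $Y\in\mathcal{D}$, whereas the paper instead computes $R^0(X,Y)\v$ directly from the constant-curvature formula with $X,Y\in\mathcal{D}$ and uses $\nabla^0_X\v=fX$ there; both routes are short and equally valid.
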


\begin{proof}
From Proposition \ref{phs1}, it follows that $N$ is either ruled surface or we have the orthogonal decomposition $TN=\mbox{span}\{T\}\oplus \mathcal{D}$, where $\mbox{span}\{T\}$ and $ \mathcal{D} =\textnormal{span}\{T\}^\perp$ are integrable distributions and $\nabla_TT=0$. Moreover, by the assumption we have $A(X)=\delta X$, $X\in \mathcal{D}$. From \eqref{31}, it follows that
$$
\nabla_X T=\mu X, \quad X\in \mathcal{D},
$$
where $\mu=\cot\theta\delta-\frac{f}{\sin\theta}$. This implies that each leaf of $\mathcal{D}$ is totally umbilical hypersurface of $N$ with the unit normal $T$ whose mean curvature of each leaf is $(1-m)\mu$. Moreover, the mean curvature vector field is trivally parallel, namely $\mathcal{D}$ is a spherical foliation. Consequently, by \cite[Proposition 3]{pr}, $N$ is locally the twisted product $I\times_\lambda F$, where $I$ is a real interval, $F$ is a Riemannian manifold, and $\lambda >0$ is a smooth function on $I\times F$. Hence, if $\langle,\rangle_N$ and $\langle,\rangle_F$ are metric tensors of $N$ and $F$, respectively, then we have
$$
\langle,\rangle_N=ds^2+\lambda^2 \langle,\rangle_F, \quad T=\partial_s.
$$
Since $\nabla_X \partial s=(\log \lambda)'X$, we have $(\log \lambda)'=\mu$. To complete the proof, by assuming $M=\r^m$ or $M=\h^m$ with $m>3$ we will show that the two functions $\delta$ and $f$ are constants on the leafs of $\mathcal{D}$. 
\begin{enumerate}
\item Because $\mathcal{D}$ is integrable, it follows that $\nabla_XY\in \mathcal{D}$ and hence
$$
(\nabla_X A)(Y)=X(\delta)Y+\delta\nabla_XY-A(\nabla_XY)=X(\delta)Y.
$$
By the Codazzi's equation, we obtain
$$
0=(\nabla_X A)(Y)-(\nabla_Y A)(X)=X(\delta)Y-Y(\delta)X, \quad X,Y\in \mathcal{D}.
$$
Since this holds for every $X\in \mathcal{D}$, we arrive at $X(\delta)=0$.
\item Denote by $R^0$ the Riemannian curvature tensor of $M$. Since $M$ is the Euclidean or hyperbolic ambient space, it follows that
$$
R^0(X,Y)\v=\epsilon(\langle Y,\v \rangle X-\langle X,\v \rangle Y), \quad \epsilon\in\{-1,0\}.
$$
For every $X,Y\in\mathcal{D}$, we have $R^0(X,Y)\v=0$, and $\nabla^0_X\v=fX$ and $\nabla_Y^0\v=fY$. This yields
\begin{eqnarray*}
0=R^0(X,Y)\v&=&X(f)Y+f\nabla^0_XY-Y(f)X-f\nabla^0_YX-f[X,Y]\\
&=&X(f)Y-Y(f)X, \quad X,Y\in \mathcal{D}.
\end{eqnarray*}
Similar to the previous item, we obtain $X(f)=0$, completing the proof.
\end{enumerate}
\end{proof}

We conclude this section by providing examples of constant angle hypersurface in $\r^m$ and $\h^m$ with anti-torqued axis.

\begin{example} \label{ehs}
\begin{enumerate}
\item {\it Euclidean space.} Let $\v=\frac{\Phi}{|\Phi|}$ be the anti-torqued vector field on $\r^4 \setminus\{0\}$, where $\Phi$ is the position vector field. Take a planar logarithmic spiral $(p(u),q(u))$ with $u\in \r$, where $p(u)=e^u\cos u$ and $q(u)=e^u\sin u$. Introduce $\zeta \in \s^2 \subset \r^3$ as 
$$
\zeta(v,w)=(\cos v\cos w,\cos v\sin w,\sin v), \quad v,w\in\r.
$$
We consider the rotational hypersurface $N$ parametrized by
\begin{equation}
\phi(u,\zeta)=(p(u)\zeta,q(u)), \quad \zeta \in \s^2. \label{rotpar}
\end{equation}
The unit normal to $N$ and the vector field $\v\circ \phi$ are given by
$$
U=\frac{1}{\sqrt{2}e^u}(-q'\zeta,p'), \quad \v=\frac{1}{e^u}(p\zeta,q).
$$
It is easy to see that the angle between $U$ and $\v$ is $\theta=\frac{3\pi}{4}$. Also we have 
$$
U=\frac{1}{\sqrt{2}}(-(\sin u+\cos u)\zeta,-\sin u+\cos u), \quad \v=(\cos u\zeta,\sin u)
$$
and hence $\v-\langle \v, U \rangle_e U=\frac{1}{2e^u}\phi_u$. We set the unit tangential component of $\v$ as follows
$$
T=\frac{\v-\langle \v, U \rangle_e U}{|\v-\langle \v, U \rangle_e U|_e}=\frac{1}{\sqrt{2}e^u}\phi_u=\frac{1}{\sqrt{2}e^u}(p'(u)\zeta,q'(u)),
$$
where $|\cdot|_e$ denotes the Euclidean norm. For any vector $\xi \in T_\zeta\s^2$ it follows that the vector $p(u)(\xi,0)\in T_{\phi(u,\zeta)}N$ is perpendicular to $T$. Therefore, the orthogonal distribution $\mathcal{D}$ to $\mbox{span} \{T\}$ is
$$
\mathcal{D}=\{p(u)(\xi,0)|\xi \in T_\zeta\s^2\}.
$$
The shape operator is
$$
A(T)=-\nabla^0_TU=\frac{1}{\sqrt{2}e^u}U_u=\frac{1}{\sqrt{2}e^u}T,
$$
which implies that \eqref{33} holds because $f\circ \phi =e^{-u}$ and $\cos\theta=-\frac{1}{\sqrt{2}}$. In addition, the restriction of $A$ on $\mathcal{D}$ is a multiple of the identity. In fact, we obtain
$$
A(p(u)(\xi,0))=-\nabla^0_{p(u)(\xi,0)}U=\frac{\sin(u)+\cos(u)}{\sqrt{2}}p(u)(\xi,0).
$$
On the other hand, the first fundamental form $\langle,\rangle_N$ of $N$ is given by
$$\langle,\rangle_N=2e^{2u}du^2+p(u)^2\langle,\rangle_{\s^2},$$
where $\langle,\rangle_{\s^2}$ is the metric tensor of $\s^2$. Changing the parameter $s=\sqrt{2}e^u$, we have 
$$
\lambda(s)=p(u(s))=\frac{s}{\sqrt{2}}\cos (\ln \frac{s}{\sqrt{2}}),
$$
which gives $\langle,\rangle_N=ds^2+\lambda(s)^2g_{\s^2}$. In other words, $N$ is a warped product $I\times_\lambda \s^2$, where $I$ is a real interval. With respect to this metric, $T=\partial_s$ and it is clear that the integral curves of $T$ are geodesics of $N$.
\item {\it Hyperbolic space}. Let $\v=-x_4\partial_4$ be the anti-torqued vector field on $\h^4$. We can consider a hypersurface $N$ parametrized by \eqref{rotpar}, where $p(u)=\cos\theta u$ and $q(u)=\sin\theta u$ with $\sin \theta \cos \theta\neq 0$ and $u>0$. Let $U^e$ and $U^h$ denote the Euclidean and hyperbolic unit normals, respectively. Using $U^e=(q'\zeta,-p')$ and $\langle U^e,\partial_4 \rangle_e =-\cos\theta $, we conclude that $U^h=(x_4\circ \phi)U_e=qU_e$ makes constant angle $\theta$ with $\v$, that is, $\langle U^h,\v \rangle_h =\cos\theta $. The unit tangential projection $T$ of $\v$ on $N$ is given by
$$
T=\frac{\v-\langle \v,U^h\rangle_h U^h}{|\v-\langle \v,U^h\rangle_h U^h|_h}=-q \phi_u.
$$
Let $\nabla^h$ and $\nabla^e$ denote the Levi-Civita connections on $\h^4$ and $\r^4$, respectively. Then, we have the relation (\cite{lo})
$$
\nabla^h_XY=\nabla^e_XY-\frac{1}{x_4}(\langle \partial_4,X\rangle_e Y+\langle \partial_4,Y\rangle_e X-\langle X,Y\rangle_e \partial_4).
$$
Hence, $\nabla^h_TU^h=\cos \theta T$. Since the conformal scalar $f$ of $\v$ is constantly equal to $1$, equation \eqref{33} holds. Similar to the previous item, the orthogonal distribution $\mathcal{D}$ to $\mbox{span} \{T\}$ is given by $\mathcal{D}=\{p(u)(\xi,0)|\xi \in T_\zeta\s^2\}.$ Then we have $\nabla^h_{p(u)(\xi,0)}U^h=(\sin\theta q+\cos\theta)p(u)(\xi,0)$, which means that the restriction of $A$ on $\mathcal{D}$ is a multiple of the identity. On the other hand, the first fundamental form of $N$ is given by
$$
\langle,\rangle_N=(\sin\theta u)^{-2}du^2+\cot^2\theta \langle,\rangle_{\s^2},
$$
where $\langle,\rangle_{\s^2}$ is the metric tensor of $\s^2$ in $\r^3$. By changing the parameter $s=\csc \theta \log u$, we have $\langle,\rangle_N=ds^2+\cot^2\theta \langle,\rangle_{\s^2}$, which is the metric tensor of the Riemannian product $I\times \s^2(\cot\theta)$, where $I$ is a real interval and $\s^2(\cot\theta)$ is the sphere in $\r^3$ of radius $\cot \theta$. This is the particular case of a warped product with constant warping function. With respect to this metric, $T=\partial_s$ and the integral curves of $T$ are geodesics of $N$.
\end{enumerate}
\end{example}

%%%%%%%%%%%%%%%%%%%%%%%%%%%%%%%%
\section{Hypersurfaces with respect to torqued vector fields} \label{torqued}
%%%%%%%%%%%%%%%%%%%%%%%%%%%%%%%%

Let $\v$ be a torqued vector field on a Riemannian manifold $M$ with conformal scalar $f$ and generating form $\omega$. By its definition $\v$ satisfies expression \eqref{int-tor} with $\omega(\v)=0$. If $\w$ is the vector field dual to $\omega$, then equivalently $\langle \w,\v\rangle=0$. In particular, the vector field $\v$ is concircular if $\omega=0$.

Let $N$ be an immersed hypersurface in $M$ by $\phi$, and let $U$ denote the unit normal of $\phi$. This last section is devoted to consider hypersurfaces satisfying
\begin{equation} \label{51}
\langle U, \v \rangle =\vartheta, \quad \vartheta \in \r,
\end{equation}
where $\v$ is a torqued vector field. If the tangential direction of $\v^{\top}$ on $N$ vanishes, then from \cite[Proposition 4. 7]{amc1} it follows that $N$ is totally umbilical. Such examples can be constructed as follows.

\begin{example}\label{5e1}
Let $I\subset \r$ be a real interval and let $u>0$ be a coordinate on $I$. Consider the warped product space $M=I\times_u \r^3$ enowed with the metric
$$
\langle , \rangle=du^2+u^2(dx^2+dy^2+dz^2).
$$ 
Denote by $\nabla^0$ the Levi-Civita connection on $M$.
\begin{enumerate}
\item Set $\v=u\partial_u$. From Proposition \ref{warp-con}, it follows that $\nabla^0_X \v=X$, that is, $\v$ is a concircular vector field on $M$ with conformal scalar $f=1$ (see also \cite[Example 1.1]{cbook}). Let $N$ be the fiber $N=\{u=u_0\in\r,u_0>0\}$. Its unit normal is $U=\partial_u$ and hence the function $\langle \v,U \rangle=u$ is the constant $u_0$ on $N$. It is easy to see that $N$ is totally umbilical.
\item Let $\v=ux\partial_u$. Then, $\v$ is a torqued vector field on $M$ (see \cite[Theorem 2.3]{c01}). Using Proposition \ref{warp-con}, we have
$$
\nabla^0_X \v=x X+d(\log x) \v, \quad X\in\x(M),
$$
where the conformal scalar and generating form of $\v$ are respectively $f=x$ and $\omega=d(\log x)$. It is easy to see that $\omega(\v)=0$, i.e. $\v$ is a torqued vector field. Assume that $N$ is the vertical hyperplane $N=\{x=x_0\in\r\}$, which is totally geodesic. Then its unit normal is $U=\frac1u\partial u$ and hence $\langle \v,U\rangle=x$ is a constant function on $N$.
\end{enumerate}
\end{example}

\begin{definition}\label{def5}
Let $\v$ be a torqued vector field globally defined on a Riemannian manifold $M$. An orientable hypersurface $N\subset M$ is called a {\it torqued hypersurface with axis} $\v$ if $\v^{\top} \neq 0$ and condition \eqref{51} is satisfied. 
\end{definition}

In particular, $N$ is called a non-trivial concircular hypersurface if the axis $\v$ in Definition \ref{def5} is a concircular vector field (see  \cite{loy4}). 

Assume that $N$ is a torqued hypersurface in $M$ with axis $\v$. Then we have the following orthogonal decompositions of $\v$ and its generative $\w$ in their tangential and normal components with respect to $N$
\begin{equation}\label{decompt}
\v=\alpha T_1  +\vartheta U, \quad \w=\beta T_2 +\gamma U,
\end{equation}
where $T_1,T_2 \in \mathfrak{X}(N)$ are unit vector fields and $\alpha,\beta,\gamma$ are smooth functions.

The functions given in \eqref{decompt} satisfy some properties.

\begin{proposition} \label{abg}
Let the functions $\alpha,\beta,\gamma$ and the constant $\vartheta$ be given as in \eqref{decompt}.
\begin{enumerate}
\item[(i)] $\alpha$ is not a constant function on $N$. 
\item[(ii)] If $\vartheta=0$, then $\beta$ is zero on $N$.
\item[(iii)] If $\v$ is not a concircular vector field, then $\gamma$ is never zero on $N$.
\end{enumerate}
\end{proposition}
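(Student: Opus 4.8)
The plan is to substitute the decompositions \eqref{decompt} into the defining equation \eqref{int-tor}, exactly as in the proof of Proposition \ref{p41}, and to separate tangential and normal parts. For $X\in\x(N)$ the Gauss and Weingarten formulas give
\[
\nabla^0_X\v=X(\alpha)T_1+\alpha\nabla_XT_1+\alpha\langle A(X),T_1\rangle U-\vartheta A(X),
\]
while \eqref{int-tor} together with $\omega(X)=\langle\w,X\rangle=\beta\langle T_2,X\rangle$ gives $\nabla^0_X\v=fX+\beta\langle T_2,X\rangle(\alpha T_1+\vartheta U)$. Comparing normal parts yields the shape-operator relation $\alpha\langle A(X),T_1\rangle=\vartheta\beta\langle T_2,X\rangle$, that is $A(T_1)=\tfrac{\vartheta\beta}{\alpha}T_2$; taking the $T_1$-component of the tangential parts (equivalently, differentiating $|\v|^2=\alpha^2+\vartheta^2$ along $N$) yields
\[
\nabla\alpha=fT_1+\frac{\beta(\alpha^2+\vartheta^2)}{\alpha}T_2.
\]
Finally the torqued condition $\langle\w,\v\rangle=0$ reads $\alpha\beta\langle T_1,T_2\rangle+\gamma\vartheta=0$. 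I will use throughout that $f$ is nowhere zero and, for a non-concircular axis, that $\omega$ (hence $\w$) is nowhere zero.

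For (i), since $\vartheta$ is constant, $\alpha$ is constant on $N$ exactly when $|\v|$ is. If $\alpha$ were constant the gradient formula would force $fT_1+\tfrac{\beta(\alpha^2+\vartheta^2)}{\alpha}T_2=0$. When $\v$ is concircular one has $\w=0$, hence $\beta=0$, and the relation collapses to $fT_1=0$, contradicting $f\neq0$; this settles (i) for a concircular axis. For a genuinely torqued axis the same relation only forces $\v^{\top}$ and $\w^{\top}$ to be antiparallel, which is compatible with the first-order data, so the contradiction must come from second-order information (see the last paragraph).

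For (ii), setting $\vartheta=0$ collapses the shape-operator relation to $A(T_1)=0$ and the torqued condition to $\omega(T_1)=\beta\langle T_1,T_2\rangle=0$, while the gradient formula becomes $\nabla\alpha=fT_1+\alpha\beta T_2$. Thus either $\beta=0$, as claimed, or $T_2\perp T_1$; in the latter case the pullback of $\omega$ to $N$ equals $\beta$ times the dual of $T_2$, and I would feed this, via the integrability identity below, into a closedness constraint on $\omega$ to force $\beta=0$. For (iii), write $\gamma=\omega(U)$ and suppose $\gamma(p)=0$ at some $p$. The torqued condition gives $\beta\langle T_1,T_2\rangle=0$ there; if $\beta(p)=0$ then $\w(p)=0$, so $\omega_p=0$, contradicting that $\v$ is not concircular. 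The remaining possibility, $\beta(p)\neq0$ and $T_1\perp T_2$ at $p$ (which forces $\vartheta\neq0$), is precisely the borderline configuration that the first-order relations cannot exclude, and must again be ruled out using second-order data.

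The main obstacle, common to the non-concircular case of (i) and the borderline case of (iii), is that the first-order identities above are self-consistent with the forbidden configurations; resolving them requires the second-order integrability condition obtained by differentiating \eqref{int-tor}, namely
\[
R^0(X,Y)\v=\psi(X)Y-\psi(Y)X+d\omega(X,Y)\v,\qquad \psi:=df-f\omega.
\]
Pairing this with $\v$ and using the antisymmetry $\langle R^0(X,Y)\v,\v\rangle=0$ gives the clean scalar identity $\psi(X)\langle Y,\v\rangle-\psi(Y)\langle X,\v\rangle+|\v|^2\,d\omega(X,Y)=0$, which couples $f$, $\omega$ and $d\omega$ along $N$. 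I expect the decisive step to be combining this with a structural property of torqued fields (such as the closedness of $\omega$), so that the antiparallel/orthogonal borderline configurations become incompatible with $f\neq0$; making that incompatibility precise \emph{independently of the ambient curvature of} $M$ is the part I expect to be hardest.
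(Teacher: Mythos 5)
Your first-order identities are correct and are exactly the ones the paper itself derives (your normal-part relation is \eqref{eqs-t2}, your tangential relation is \eqref{eqs-t1}, and your gradient formula $\nabla\alpha=fT_1+\tfrac{\beta(\alpha^2+\vartheta^2)}{\alpha}T_2$ follows from them); moreover your form of the orthogonality relation, $\alpha\beta\langle T_1,T_2\rangle+\vartheta\gamma=0$, is the \emph{correct} one. The paper's own proof is different and much shorter: it expands $\langle \v,\w\rangle=0$ as $\alpha\beta+\vartheta\gamma=0$, silently discarding the factor $\langle T_1,T_2\rangle$, i.e., treating the tangential parts of $\v$ and $\w$ as parallel. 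From that identity, (ii) and (iii) are one-liners, since $\alpha\neq 0$ by Definition \ref{def5} and a non-concircular axis is taken to have nowhere-vanishing $\w$; for (i) the paper invokes \cite[Proposition 3.2]{amc} (a torqued vector field is never of constant length) together with $|\v|^2|_N=\alpha^2+\vartheta^2$ and the constancy of $\vartheta$. So, relative to the paper, the two ingredients you lack are the constant-length citation for (i) and the suppressed inner product for (ii)--(iii).

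The crucial point, however, is that the borderline cases you honestly flag as unresolved cannot be closed by your proposed second-order analysis, nor by any other argument: they genuinely occur, so the statement is false without the tacit hypothesis $T_2=\pm T_1$. For (ii) and (iii), take the paper's own ambient space of Example \ref{5e1}, $M=I\times_u\r^3$ with $\v=ux\partial_u$ (restricted to $x>0$), so $f=x$, $\omega=d(\log x)$ and $\w=\frac{1}{u^2x}\partial_x$; the hyperplane $N=\{y=y_0\}$ has unit normal $U=\frac1u\partial_y$, hence $\vartheta\equiv0$ and $\v^\top=\v\neq0$, so $N$ is a torqued hypersurface with axis $\v$, yet $\w$ is tangent to $N$: $\gamma\equiv0$ and $\beta=\frac{1}{ux}\neq0$, with $T_1=\partial_u$ orthogonal to $T_2=\frac1u\partial_x$ — contradicting both (ii) and (iii). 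Item (i) fails as well for a genuinely torqued axis: on the twisted product metric $ds^2+\frac{s^2}{(1-x)^2}(dx^2+dy^2)$ ($s>0$, $x<1$) the field $\v=\frac{s}{1-x}\partial_s$ is torqued with $f=\frac{1}{1-x}$, $\omega=\frac{dx}{1-x}$, and the level set $N=\{\frac{s}{1-x}=c\}$, with unit normal $U=\frac{1}{\sqrt2}\bigl(\partial_s+\frac{1-x}{s}\partial_x\bigr)$, satisfies $\vartheta\equiv\frac{c}{\sqrt2}$ and $\alpha\equiv\frac{c}{\sqrt2}$, both constant — precisely the antiparallel configuration $f\v^\top=-|\v|^2\w^\top$ that your gradient formula could not exclude (this also shows the paper's proof of (i) is gapped: non-constancy of $|\v|$ on $M$ does not imply non-constancy along $N$). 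The upshot: the only part that is provable as stated — and which you do prove — is (i) for a concircular axis with nowhere-zero $f$; for the remaining claims no completion of your argument (or of the paper's) exists, and the proposition requires the additional hypothesis that $\w^\top$ is parallel to $\v^\top$.
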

\begin{proof}
Using $\langle \v,\w \rangle=0$, we have $\alpha\beta +\vartheta \gamma=0$. Since \cite[Proposition 3.2]{amc}, $\v$ is never of constant length, proving the first item. It means that $\gamma$ is never zero on $N$ as long as $\v$ is not a concircular vector field. The proof of the item (ii) is clear.
\end{proof}

\begin{remark}
Condition \eqref{51} does not imply that the torqued vector field $\v$ makes constant angle with the unit normal $U$ of the torqued hypersurface $N$. In fact, the angle between $\v$ and $U$ is given by $\frac{\vartheta}{\sqrt{\alpha^2+\vartheta^2}}$, which is non-constant due to Proposition \ref{abg}.
\end{remark}

Let $\nabla^0$ and $\nabla$ be the Levi-Civita connections on $M$ and $N$, respectively. Since $\v$ is torqued, for any $X\in \mathfrak{X}(N)$ it follows that
\begin{eqnarray*}
\nabla^0_X\v&=&fX+\beta\langle X,T_2\rangle (\alpha T_1  +\vartheta U), \\ \nabla^0_X\v&=&X(\alpha)T_1+\alpha\nabla_XT_1+\alpha\langle A(X),T_1\rangle U-\vartheta A(X).
\end{eqnarray*}
Comparing the tangent and normal components, we obtain
\begin{eqnarray}
fX+(\beta\langle X,T_2\rangle - X(\alpha))T_1&=&\alpha\nabla_XT_1-\vartheta A(X), \label{eqs-t1}\\ 
\vartheta\beta\langle X,T_2\rangle&=&\alpha\langle A(X),T_1\rangle.\label{eqs-t2}
\end{eqnarray}
\begin{proposition}\label{pro-torq}
Let $N$ be a torqued hypersurface in a Riemannian manifold $M$ with axis $\v$. Then, we have  \eqref{eqs-t1} and \eqref{eqs-t2}.
\end{proposition}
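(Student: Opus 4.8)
The plan is to compute $\nabla^0_X\v$ in two independent ways for an arbitrary $X\in\x(N)$ and then match the tangential and normal parts relative to $N$; this comparison produces \eqref{eqs-t1} and \eqref{eqs-t2} directly.

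First I would invoke the defining relation \eqref{int-tor} of a torse-forming field, $\nabla^0_X\v=fX+\omega(X)\v$. Since $\w$ is the generative of $\v$, dual to $\omega$, we have $\omega(X)=\langle\w,X\rangle$. Feeding in the decomposition of $\w$ from \eqref{decompt} and using that $\langle U,X\rangle=0$ for $X$ tangent to $N$, the normal term of $\w$ drops out and $\omega(X)=\beta\langle X,T_2\rangle$. Substituting the decomposition $\v=\alpha T_1+\vartheta U$ then gives the first expression $\nabla^0_X\v=fX+\beta\langle X,T_2\rangle(\alpha T_1+\vartheta U)$.

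Second, I would differentiate the same decomposition $\v=\alpha T_1+\vartheta U$ via the Leibniz rule, crucially using that $\vartheta\in\r$ is constant so that $X(\vartheta)=0$. Applying the Gauss formula $\nabla^0_X T_1=\nabla_X T_1+\langle A(X),T_1\rangle U$ to the $T_1$-term and the shape-operator identity $\nabla^0_X U=-A(X)$ to the $U$-term yields the second expression $\nabla^0_X\v=X(\alpha)T_1+\alpha\nabla_X T_1+\alpha\langle A(X),T_1\rangle U-\vartheta A(X)$.

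Finally I would equate these two expressions and split into components. The purely normal terms (the multiples of $U$) yield \eqref{eqs-t2}, while the tangential terms, after collecting the coefficient of $T_1$, yield \eqref{eqs-t1}. I expect no genuine obstacle here: the only care needed is the bookkeeping of which summands are tangent and which are normal—in particular recognizing that $fX$, $\nabla_X T_1$ and $A(X)$ are tangent while $\langle A(X),T_1\rangle U$ is normal—together with the use of the constancy of $\vartheta$; the remainder is a direct term-by-term comparison.
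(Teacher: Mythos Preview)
Your proposal is correct and mirrors the paper's own argument essentially line for line: the paper also computes $\nabla^0_X\v$ once from the torse-forming relation (using $\omega(X)=\beta\langle X,T_2\rangle$ and the decomposition of $\v$) and once by differentiating $\v=\alpha T_1+\vartheta U$ via the Gauss formula and $\nabla^0_X U=-A(X)$, then compares tangential and normal components. If anything, your write-up is slightly more explicit than the paper's about why the $\gamma U$-part of $\w$ disappears and about the role of the constancy of $\vartheta$.
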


In particular, if $\vartheta=0$ then from Proposition \ref{abg} it follows that $\beta$ is zero on $N$. Hence, \eqref{eqs-t1} and \eqref{eqs-t2} reduce to
\begin{eqnarray}
fX- X(\alpha)T_1&=&\alpha\nabla_XT_1, \label{eqs-t11}\\ 
0&=&\alpha\langle A(X),T_1\rangle . \label{eqs-t21}
\end{eqnarray}

It follows from \eqref{eqs-t21} that $A(T_1)=0$. Writing $X=T_1$ in \eqref{eqs-t11}, we conclude that $f=T_1(\alpha)$ and $\nabla_{T_1}T_1=0$. This means that the integral curves of $T_1$ are geodesics of the ambient space $M$. Also, for every $Y \in \mbox{span}\{T\}^{\perp}$ we have
\begin{equation*}
fY-Y(\alpha)T_1=\alpha\nabla_YT_1, \quad Y \in \mbox{span}\{T\}^{\perp}, 
\end{equation*}
which yields that $Y(\alpha)=0$ and $\nabla_YT_1=\frac{f}{\alpha}Y$. Therefore, following the proof of Theorem \ref{ths1}, we have obtained the next result.

\begin{theorem}\label{ths10}
Let $N$ be a torqued hypersurface in a complete Riemannian manifold $M$ with axis $\v$, where $\vartheta=0$. Then $N$ is locally a ruled hypersurface which is the warped product of a real interval $I$ and a Riemannian manifold $F$, with the metric tensor
$$
\langle,\rangle_N=ds^2+\alpha^2(s)\langle,\rangle_F, \quad T=\partial_s,
$$
where the conformal scalar $f$ of $\v$ satisfies $f=\alpha'(s)$.
\end{theorem}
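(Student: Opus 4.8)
The plan is to build on the relations already extracted from \eqref{eqs-t11} and \eqref{eqs-t21} in the case $\vartheta=0$, and then identify $N$ as a warped product by exhibiting the two complementary foliations required by the Ponge--Reckziegel criterion \cite[Proposition 3]{pr}, in the same spirit as the proof of Theorem \ref{ths1}. From those relations we already have $A(T_1)=0$, $f=T_1(\alpha)$, $\nabla_{T_1}T_1=0$, and, for every $Y$ in $\mathcal{D}:=\mbox{span}\{T_1\}^\perp$, both $Y(\alpha)=0$ and $\nabla_YT_1=\frac{f}{\alpha}Y$ (here $\alpha=|\v^\top|\neq 0$ since $N$ is a torqued hypersurface). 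Feeding $\nabla_{T_1}T_1=0$ and $A(T_1)=0$ into the Gauss formula gives $\nabla^0_{T_1}T_1=0$, so the integral curves of $T_1$ are ambient geodesics and $N$ is ruled; thus $\mbox{span}\{T_1\}$ is a totally geodesic foliation, the base direction of the desired warped product.

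Next I would check that $\mathcal{D}$ is integrable and spherical. Integrability follows exactly as in Proposition \ref{phs1}: for $X,Y\in\mathcal{D}$ one computes $\langle \nabla_XY,T_1\rangle=-\langle Y,\nabla_XT_1\rangle=-\frac{f}{\alpha}\langle X,Y\rangle$ and likewise $\langle \nabla_YX,T_1\rangle=-\frac{f}{\alpha}\langle X,Y\rangle$, so the symmetric terms cancel and $\langle [X,Y],T_1\rangle=0$. The relation $\nabla_YT_1=\frac{f}{\alpha}Y$ then says every leaf of $\mathcal{D}$ is totally umbilical in $N$ with unit normal $T_1$ and umbilicity factor $\frac{f}{\alpha}$, so its mean curvature vector is a multiple of $T_1$.

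To promote this to a spherical foliation I must verify that the mean curvature vector is parallel in the normal bundle, that is $Y(\frac{f}{\alpha})=0$ for $Y\in\mathcal{D}$; since $Y(\alpha)=0$ this reduces to $Y(f)=0$. I would obtain this from $f=T_1(\alpha)$ via $Y(f)=[Y,T_1]\alpha=(\nabla_YT_1-\nabla_{T_1}Y)\alpha$: the first term vanishes because $\nabla_YT_1=\frac{f}{\alpha}Y\in\mathcal{D}$ and $\mathcal{D}$ annihilates $\alpha$, while $(\nabla_{T_1}Y)\alpha$ reduces to $\langle \nabla_{T_1}Y,T_1\rangle\, T_1(\alpha)$, which is zero because $\langle \nabla_{T_1}Y,T_1\rangle=-\langle Y,\nabla_{T_1}T_1\rangle=0$. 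I expect this short computation---ensuring the mean curvature is genuinely \emph{parallel}, not merely that the leaves are umbilical---to be the main obstacle, since it is precisely what separates a generic twisted product from the warped product asserted here.

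With $\mbox{span}\{T_1\}$ totally geodesic and $\mathcal{D}$ spherical, \cite[Proposition 3]{pr} yields that $N$ is locally a twisted product $I\times_\lambda F$ with $T_1=\partial_s$ and $\langle,\rangle_N=ds^2+\lambda^2\langle,\rangle_F$, and comparing $\nabla_Y\partial_s=(\log\lambda)_sY$ with $\nabla_YT_1=\frac{f}{\alpha}Y$ gives $(\log\lambda)_s=\frac{f}{\alpha}$. Finally I would upgrade the twisted product to a warped one: since $Y(\alpha)=0$, the function $\alpha$ depends only on $s$, hence $f=T_1(\alpha)=\alpha'(s)$ and $(\log\lambda)_s=(\log\alpha)'(s)$; integrating in $s$ shows $\lambda$ coincides with $\alpha(s)$ up to a fiber-dependent factor that can be absorbed into $\langle,\rangle_F$. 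Thus $\lambda=\alpha(s)$, the metric acquires the stated form $ds^2+\alpha^2(s)\langle,\rangle_F$, and $f=\alpha'(s)$, completing the proof.
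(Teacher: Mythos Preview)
Your argument is correct and follows essentially the same route as the paper: derive $A(T_1)=0$, $\nabla_{T_1}T_1=0$, $Y(\alpha)=0$, $\nabla_YT_1=\frac{f}{\alpha}Y$ from \eqref{eqs-t11}--\eqref{eqs-t21}, then invoke \cite[Proposition~3]{pr} exactly as in the proof of Theorem~\ref{ths1}. You are in fact more careful than the paper in one respect: you explicitly verify $Y(f)=0$ (via $f=T_1(\alpha)$ and the bracket computation), which is what guarantees the foliation $\mathcal{D}$ is spherical rather than merely umbilical and hence that the product is warped rather than twisted---the paper's text simply refers back to Theorem~\ref{ths1} without isolating this point.
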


\begin{remark}
When $\v$ is a concircular vector field and the ambient space is a real space form, Theorem \ref{ths10} concludes the result \cite[Proposition 7]{loy4}.
\end{remark}

An example of such a torqued hypersurface can be constructed as follows.

\begin{example}
Let the ambient space $M$ be the warped product as given in Example \ref{5e1}. We consider the torqued vector field  $\v=F(x,y,z)u\partial_u$ on $M$. Then, its conformal scalar $f=F(x,y,z)$ and generating form $\omega=d(\log F)$. Let also $\Sigma \subset \r^3$ be a surface given in implicit form $F(x,y,z)=1$ with metric tensor $\langle,\rangle_\Sigma$ and unit normal $\widetilde{U}$. Assume that the hypersurface $N\subset M$ is the cylinder $I\times \Sigma$. Then, the lift of $\widetilde{U}$ on $\r^3$ is normal to $N$. Hence If $U$ is the unit normal vector field to $N$, then it follows that $U=\frac{1}{u|\nabla^eF|}\nabla^eF$, where $\nabla^eF$ is the gradient of $F$ in $\r^3$. Since $U$ has no $\partial_u$-component, we have $\langle U,\v\rangle=0$, which means that $\v\in \x(N)$. Comparing with \eqref{decompt}, it follows that $\alpha=uF(x,y,z)$, and the restriction of $\alpha$ on $N$ is $u$. It is direct to see that the first fundamental form of $N$ is given by $\langle,\rangle_N=du^2+u^2\langle,\rangle_\Sigma$, verifying Theorem \ref{ths10}.
\end{example}

Assume next that $\v$ is a concircular vector field and $N$ is a concircular hypersurface in $M$ with axis $\v$. In this case, it follows directly from \eqref{eqs-t1} and \eqref{eqs-t2} that 
\begin{equation*}
fX- X(\alpha)T_1=\alpha\nabla_XT_1-\theta A(X), \quad X\in \x(N), 
\end{equation*}
and $\langle A(X),T_1\rangle=0$ for $X\in \x(N)$, respectively. Hence, we again have that $A(T_1)=0$, $f=T_1(\alpha)$, and $\nabla_{T_1}T_1=0$, impyling the following result.

\begin{theorem}\label{ths11}
Let $N$ be a non-trivial concircular hypersurface in a complete Riemannian manifold $M$. Then $N$ is locally a ruled hypersurface.
\end{theorem}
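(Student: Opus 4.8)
The plan is to exhibit on $N$ a one-dimensional foliation whose leaves are geodesics of the ambient manifold $M$, since by definition (see \cite{lrs}) this is precisely what it means for $N$ to be ruled. The natural candidate for the tangent field of this foliation is $T_1$, the unit tangential component of the concircular axis $\v$ appearing in the decomposition \eqref{decompt}.

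First I would exploit that a concircular vector field has vanishing generating form, so that $\beta=0$ in \eqref{decompt}. Feeding this into \eqref{eqs-t1} and \eqref{eqs-t2} collapses them to $fX-X(\alpha)T_1=\alpha\nabla_XT_1-\vartheta A(X)$ and $\alpha\langle A(X),T_1\rangle=0$ for all $X\in\x(N)$. Here the non-triviality hypothesis $\v^{\top}\neq 0$ is essential: it guarantees $\alpha=|\v^{\top}|\neq 0$, so the second identity forces $\langle A(X),T_1\rangle=0$ for every $X$, and self-adjointness of the shape operator then yields $A(T_1)=0$. Setting $X=T_1$ in the first identity and noting that $\nabla_{T_1}T_1$ is orthogonal to the unit field $T_1$ while the remaining terms are multiples of $T_1$, I would compare the components parallel and perpendicular to $T_1$ to obtain $f=T_1(\alpha)$ together with $\nabla_{T_1}T_1=0$; these are exactly the facts recorded just before the statement.

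The decisive step is then to lift the intrinsic geodesic property $\nabla_{T_1}T_1=0$ to the ambient connection via the Gauss formula. Since $A(T_1)=0$, the normal term drops out:
$$
\nabla^0_{T_1}T_1=\nabla_{T_1}T_1+\langle A(T_1),T_1\rangle U=0.
$$
Hence the integral curves of $T_1$ are geodesics of $M$, not merely of $N$, so $\mbox{span}\{T_1\}$ defines a foliation of $N$ by ambient geodesics and $N$ is locally ruled.

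There is no serious analytic obstacle here; the content is entirely algebraic, and the two load-bearing points are the vanishing of $\beta$ (which is what distinguishes the concircular case from the general torqued case, where $\beta\neq 0$ would couple $A$ to $T_2$ and obstruct the relation $A(T_1)=0$) and the non-triviality $\alpha\neq 0$ used to cancel the factor $\alpha$ in \eqref{eqs-t2}. I would also flag that, unlike Theorem \ref{ths10}, no constraint $\vartheta=0$ is imposed, so the term $-\vartheta A(X)$ survives in the first identity; one should verify that it plays no role once $X=T_1$ is inserted, precisely because $A(T_1)=0$.
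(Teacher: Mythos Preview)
Your proposal is correct and follows essentially the same approach as the paper: reduce \eqref{eqs-t1}--\eqref{eqs-t2} using $\beta=0$, deduce $A(T_1)=0$ and $\nabla_{T_1}T_1=0$ from non-triviality and the choice $X=T_1$, and then invoke the Gauss formula to conclude that the integral curves of $T_1$ are ambient geodesics. The paper's argument is terser (the Gauss-formula step is left implicit), but your write-up spells out exactly the same chain of deductions.
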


Examples of concircular hypersurfaces in real space forms can be found in \cite{loy4}. We next provide examples of such hypersurfaces in an ambient space whose sectional curvature is not constant.

\begin{example}
Take the warped product space $M$ given Example in \ref{5e1} and the concircular vector field $\v=u\partial_u$ on $M$.

\begin{enumerate}
\item Case $\langle \v,U\rangle=0$. We use spherical coordinates $(r,\rho,\varphi)$ of $\r^3$ and consider the hypersurface $N\subset M$ parametrized by $\phi(u,\rho,\varphi)=(u,r_0,\rho,\varphi)$. Its unit normal is $U=\frac1u \partial_r$ and hence $\langle \v,U\rangle=0$. The unit tangential projection of $\v$ is $T=\partial_u$. From Proposition \ref{warp-con}, it follows that $A(T)=-\nabla^0_TU=0$ and $\nabla^0_TT=0$. Then, the hypersurface $N$ is ruled in $M$ whose fundamental form is given by the warped metric $\langle,\rangle_N=du^2+(r_0u)^2\langle,\rangle_{\s^2}$, where $\langle,\rangle_{\s^2}$ is the standart metric of $\s^2\subset \r^3$. This verifies Theorem \ref{ths10}.

\item Case $\langle \v,U\rangle=\vartheta \neq 0$. Consider the graph hypersurface $N$ in $M$ on the $(r\rho\varphi)$-space. A parametrization of $N$ is given by $\phi(r,\rho,\varphi)=(h(r,\rho,\varphi),r,\rho,\varphi)$. We particularly assume that $h(r,\rho,\varphi)=\vartheta\sec r$, where $\vartheta>0$ is some constant. The unit normal to $N$ is $U=\cos r(\partial_u- \frac{\sin r}{\vartheta} \partial_r)$ and then $\v\circ \phi=\vartheta\sec r \partial_u$. Hence, since $\langle U,\v\rangle=\vartheta$, $N\subset M$ becomes a concircular hypersurface. Moreover, the tangential projection is $\v^\top=\sin r(\vartheta\tan r \partial_u+\cos r \partial_r)$, and $T=\frac{\v^\top}{|\v^\top|}=\sin r\partial_u+\vartheta^{-1}\cos^2 r\partial_r$. If $\nabla^0$ denotes the Levi-Civita connection on $M$, then by using Proposition \ref{warp-con}, we obtain $\nabla^0_TT=0$, that is, $N$ is a ruled hypersurface.

\end{enumerate}
\end{example}

%%%%%%%%%%%%%%%%%%%%%%%%%%%%%%%
\section*{Acknowledgments}
%%%%%%%%%%%%%%%%%%%%%%%%%%%%%%%
This study was supported by Scientific and Technological Research Council of Turkey (TUBITAK) under the Grant Number (123F451). The authors thank to TUBITAK for their supports.

%%%%%%%%%%%%%%%%%%%%%%%%%%%%%%%
\section*{Declarations}
%%%%%%%%%%%%%%%%%%%%%%%%%%%%%%%

\begin{itemize}
\item Conflict of interest: The authors declare no conflict of interest.

\item Ethics approval: Not applicable.

\item Availability of data and materials: Not applicable.

\item Code availability: Not applicable.

\item Authors’ contributions: Each author contributes equally to the study.
\end{itemize}

%%%%%%%%%%%%%%%%%%%%%%%%%%%%%%%


\begin{thebibliography}{99}

\bibitem{ag} J. A. Aledo, J. A. G\'alvez, Complete surfaces in the hyperbolic space with a constant principal curvature, Math. Nachr. 278(10) (2005), 1111–1116.

\bibitem{amc} M. E. Aydın, A. Mihai, C. Özgür, Torqued and anti-torqued vector fields on hyperbolic spaces, Filomat 39(20) (2025), 7043–7051.

\bibitem{amc1} M. E. Aydın, A. Mihai, C. Özgür, Rectifying submanifolds of Riemannian manifolds with anti-torqued axis, J. Korean Math. Soc. 62(5) (2025), 1127–1142.

\bibitem{bar} M. Barros, General helices and a theorem of Lancret, Proc. Amer. Math. Soc. 125(5) (1997), 1503–1509.

\bibitem{bo1} A. M. Blaga, C. Özgür, On torse-forming-like vector fields, Mediterr. J. Math. 21(8) (2024), Paper No. 214, 19 pp.

\bibitem{car} M. P. do Carmo, Riemannian geometry.  Mathematics: Theory $\&$ Applications. Birkhäuser Boston, Inc., Boston, MA, 1992. 

\bibitem{cd} P. Cermelli, A. J. Di Scala, Constant-angle surfaces in liquid crystals, Philosophical Magazine 87(12) (2007), 1871-1888.

\bibitem{cs} S. K. Chaubey, Y. J. Suh, Riemannian concircular structure manifolds, Filomat 36(19) (2022), 6699-6711.

\bibitem{crs0} B.-Y. Chen, Differential geometry of rectifying submanifolds, Int. Electron. J. Geom.  9(2) (2016), 1-8.

\bibitem{crs3} B.-Y. Chen, Rectifying submanifolds of Riemannian manifolds and torqued vector fields, Kragujevac J. Math. 41(1) (2017), 93-103.

\bibitem{cbook} B.-Y. Chen, Differential Geometry of Warped Product Manifolds and Submanifolds, World Scientific Publishing Co. Pte. Ltd., Hackensack, NJ, 2017.

\bibitem{c0} B.-Y. Chen, A simple characterization of generalized Robertson-Walker space-times, Gen. Relativity Gravitation 46(12) (2014), Paper No. 1833, 5 pp.

\bibitem{c00} B.-Y. Chen, L. Verstraelen, A link between torse-forming vector fields and rotational hypersurfaces, Int. J. Geom. Methods Mod. Phys. 14(12) (2017), 1750177, 10 pp.

\bibitem{c01} B.-Y. Chen, Classification of torqued vector fields and its applications to Ricci solitons, Kragujevac J. Math. 41(2) (2017), 239-250.

\bibitem{dgd} L. C. B. da Silva, S. F. Gilson, J. D. da Silva, Curves and surfaces making a constant angle with a parallel transported direction in Riemannian spaces. arXiv:2403.10716 (2024).

\bibitem{dan} S. Deshmukh, I. Al-Dayel, D. M. Naik, On an anti-torqued vector field on Riemannian manifolds, Mathematics 9(18) (2021), 2201.

\bibitem{dta} S. Deshmukh, N. B. Turki, H. Alodan, On the differential equation governing torqued vector fields on a Riemannian manifold, Symmetry 12(12) (2020), 1941.

\bibitem{dmvv} F. Dillen, M. I. Munteanu, J. Van der Veken, L. Vrancken, Classification of constant angle surfaces in a warped product, Balkan J. Geom. Appl. 16(2) (2011), 35–47.

\bibitem{dsr} A. J. Di Scala, G. Ruiz-Hernandez, Helix submanifolds of euclidean spaces. Monatsh. Math. 157(3) (2009), 205–215.

\bibitem{gpr} E. Garnica, O. Palmas, G. Ruiz-Hernández, Classification of constant angle hypersurfaces in warped products via eikonal functions, Bol. Soc. Mat. Mexicana, 18(1) (2012), 29-41.

\bibitem{gss} B. Guan, J. Spruck, M. Szapiel, Hypersurfaces of constant curvature in hyperbolic space I, J. Geom. Anal. 19(4) (2009), 772-795

\bibitem{id}  A. Ishan, S. Deshmukh, Torse-forming vector fields on $m-$spheres, AIMS Math. 7 (2022), 3056-3066.

\bibitem{lo} R. L\'opez, A note on helices in the Euclidean space and in the hyperbolic space, J. Geom. 116(3) (2025), Paper No. 31, 16 pp.

\bibitem{lrs} M. C. L\'opez, M. E. Rosado, A. Soria, Ruled surfaces in 3-dimensional Riemannian manifolds, Mediterr. J. Math. 21(3) (2024), Paper No. 97, 25 pp.

\bibitem{loy3} P. Lucas, J. A. Ortega-Yag\"ues, Helix surfaces and slant helices in the three-dimensional anti-De Sitter space, Rev. R. Acad. Cienc. Exactas F\'{\i}s. Nat. Ser. A Mat. RACSAM 111(4) (2017), 1201-1222.

\bibitem{loy4} P. Lucas, J. A. Ortega-Yag\"ues, Concircular helices and concircular surfaces in Euclidean 3-
space $\r^3$, Hacet. J. Math. Stat. 52(4) (2023), 995-1005.

\bibitem{loy5} P. Lucas, J. A. Ortega-Yag\"ues, Concircular hypersurfaces and concircular helices in space forms, Mediterr. J. Math. 20(6) (2023), Paper No. 320, 13 pp.

\bibitem{mam0} C. A. Mantica, L. G. Molinari, Generalized Robertson-Walker space times, a survey, Int. J. Geom. Methods Modern Phys. 14(03) (2017), 1730001.

\bibitem{mam1} C. A. Mantica, L. G. Molinari, Twisted Lorentzian manifolds, a characterization with torse-forming time-like unit vectors, Gen. Relativity Gravitation 49(4) (2017), Paper No. 51, 7 pp.

\bibitem{mih0} A. Mihai, I. Mihai, Torse forming vector fields and exterior concurrent vector fields on Riemannian manifolds and applications, J. Geom. Phys. 73 (2013), 200-208.

\bibitem{mun} M. I. Munteanu, From golden spirals to constant slope surfaces. J. Math. Phys. 51(7) 2010, Paper No. 073507, 9 pp.

\bibitem{mun1} M. I. Munteanu, A. I. Nistor, Surfaces in $E^3$ making constant angle with Killing vector fields, Internat. J. Math. 23(6) (2012), 1250023.

\bibitem{mun2} A. I. Nistor, New developments on constant angle property in $S^2\times R$, Ann. Mat. Pura Appl. 196(3) (2017), 863–875.

\bibitem{na} D. M. Naik, Ricci solitons on Riemannian manifolds admitting certain vector field, Ricerche di Matematica 73(1) (2024), 531-546.

\bibitem{pr} R. Ponge, H. Reckziegel, Twisted products in pseudo-Riemannian geometry, Geom. Dedicata 48(1) (1993), 15–25.

\bibitem{ya1} K. Yano, Concircular geometry I. Concircular transformations, Proc. Imp. Acad. Tokyo 16 (1940), 195-200.

\bibitem{ya0} K. Yano, On torse-forming direction in a Riemannian space, Proc. Imp. Acad. Tokyo 20 (1944), 340-345.

\bibitem{ymy} H. İ. Yoldaş, Ş. E. Meriç, E. Yaşar, On submanifolds of Kenmotsu manifold with torqued vector field, Hacet. J. Math. Stat. 49(2) (2020), 843-853.

\end{thebibliography}
 \end{document}